\newcommand{\A}{{\mathbb{A}}}
\newcommand{\Acal}{{\mathcal{A}}}
\newcommand{\spinc}{\mathfrak{s}}
\newcommand{\spinct}{\mathfrak{t}}
\newcommand{\alphas}{\boldsymbol{\alpha}}
\newcommand{\betas}{\boldsymbol{\beta}}
\newcommand{\Hcal}{\mathcal{H}}
\newcommand{\pfrak}{\mathfrak{p}}
\newcommand{\SpinC}{{\mathrm{Spin}^c}}
\newcommand{\ra}{\rightarrow}
\newcommand*\circled[1]{\tikz[baseline=(char.base)]{
            \node[shape=circle,draw,inner sep=2pt] (char) {#1};}}
\title{Simple balanced three-manifolds,\\ Heegaard Floer homology and\\
 the Andrews-Curtis conjecture }
 \author{Neda Bagherifard}
\email{neda.bagherifard@gmail.com}
\author{Eaman Eftekhary}
\address{School of Mathematics, Institute for Research in 
	Fundamental Sciences (IPM), P. O. Box 19395-5746, Tehran, Iran}%
\email{eaman@ipm.ir}
\date{}
\begin{document}
\begin{abstract}
	The first author introduced a notion of equivalence on a family of $3$-manifolds with boundary, called (simple) balanced $3$-manifolds in an earlier paper and discussed the analogy between the Andrews-Curtis equivalence for group presentations and the aforementioned notion of equivalence.  Motivated by the Andrews-Curtis conjecture, we use  tools from Heegaard Floer theory to prove that there are simple balanced $3$-manifolds which are not in the trivial equivalence class (i.e. the equivalence class of $S^2\times [-1,1]$).
\end{abstract}
\maketitle
\tableofcontents
\newpage

\section{Introduction} 
Suppose that $R=\{b_1,\ldots,b_m\}$ is a finite subset of the free group $F(X)$ generated by the finite set $X=\{a_1,\ldots,a_n\}$. We denote by $(X|R)$ the quotient $G$ of $F(X)$ by the normal subgroup generated by $R$. The pair $(X,R)$ is then called a {\emph{presentation}} of $G$ with {\emph{generators}} $X$   and {\emph{relators}} $R$, which is balanced if $|X|=|R|$. An {\emph{extended Andrews-Curtis transformation}} (EAC-transformation for short) on  $(X,R)$   is defined as one of the following transformations, or its inverse, which of course results in another presentation of $G$ \cite{WR} (see also \cite{Hog-Met}):
\begin{itemize}
\setlength\itemsep{-0.3em}
	\item[1.] {\bf{Composition}}: Replace  $b\in R$ with $bb'$ for some $b'\neq b$ in $R$;
	\item[2.] {\bf{Inversion}}: Replace $b\in R$ with $b^{-1}$;
	\item[3.] {\bf{Cancellation}}: Replace $b=b'aa^{-1}b''\in R$  with $b'b''$, where $a\in X$ or $a^{-1}\in X$;
	\item[4.] {\bf{Stabilization}}: Add  a new element $a$ to both $X$ and $R$;
	\item[5.] {\bf{Replacement}}: Replace $a'a$ or $a'a^{-1}$ for $a'$ in all the relators for some $a\neq a'$ in $X$.
\end{itemize} 
{\emph{Stable Andrews-Curtis transformations}} (or SAC transformations) consist of the first $4$ transformations and their inverses. The presentations $P'=( X',R')$ and $P=(X,R)$ are called {\emph{EAC equivalent} (respectively, {\emph{SAC equivalent}}) if $P'$ is obtained from $P$ by a finite sequence of EAC transformations (respectively, SAC transformations). For the trivial group, the SAC equivalence class of a presentation is the same as its EAC equivalence class (\cite{WR}). The stable Andrews-Curtis conjecture (or SAC conjecture) states that every balanced presentation of the trivial group is SAC equivalent to the trivial presentation, i.e. $(a,a)$ (c.f. \cite{A-C}). Most experts expect that the SAC conjecture is not true and there are potential counterexamples (\citep{Brown, Bu-Mac, Miller, Shpil}). One of the simplest potential counterexamples for the SAC conjecture  is given by $P_0=(X_0,R_0)$, where
\begin{equation}\label{eq-AC-example}
 X_0=\big\{x,y\big\}\quad\text{and}\quad R_0=\big\{r=x^{-1}y^2xy^{-3},s=y^{-1}x^{2}yx^{-3}\big\},
\end{equation}
(see \citep{Shpil}). The group presentation $P_0$ is considered in this paper in correspondence with a notion of equivalence for balanced $3$-manifolds, as explained below. \\

A compact oriented $3$-manifold $N$ with boundary is called {\emph{balanced}} if each component of $N$ has two boundary components of the same genus.  Let $\partial^\pm N$ denote boundary components of $N$ where the orientation of $\partial^+N$ (resp. $\partial^-N$) matches with (resp. is the opposite of) the orientation inherited as the boundary of $N$. Let $\iota^\pm: \partial^\pm N\ra N$ denote the inclusion maps and $H^\pm$ denotes the normalizer of $\iota_*^\pm(\pi_1(\partial^\pm N))$ in $\pi_1(N)$. A balanced $3$-manifold is called {\emph{simple}} if for each connected component $N$ of it as above, both quotient groups $\pi_1(N)/H^\pm$  are trivial.  Associated with each Heegaard diagram $\mathcal{H}=(\Sigma,\alphas,\betas)$
of $N$, there are two balanced presentations $P_\alpha(\mathcal{H})$ and $P_\beta(\mathcal{H})$ for the latter quotient groups where for $P_\alpha(\mathcal{H})$ (resp. $P_\beta(\mathcal{H})$) the generators are in correspondence with the $\alphas$ (resp. $\betas$) and the relators are in correspondence with the $\betas$ (resp. $\alphas$)  (see \cite{NB}). Let $\pfrak_\alpha(N)$ and $\pfrak_\beta(N)$ denote the EAC equivalence classes of the presentations $P_\alpha(\mathcal{H})$ and $P_\beta(\mathcal{H})$, respectively. Note that these EAC equivalence classes are independent of the choice of the Heegaard diagram $\mathcal{H}$ for $N$. Similarly, we may define $\pfrak_\alpha(N)$ and $\pfrak_\beta(N)$ for a balanced $3$-manifold $N$ which is not connected. If $N$ is a simple balanced $3$-manifold, $\pfrak_\alpha(N)$ and $\pfrak_\beta(N)$ are both EAC equivalence classes of presentations for the trivial group.\\

A notion of equivalence in the family of balanced $3$-manifolds was introduced in \cite{NB}. We say that a balanced $3$-manifold $N$ simplifies to another balanced $3$-manifold $N'$ if there is an embedded cylinder $C\sim S^1\times [-1,1]$ in $N$,  with $\partial^\pm C\sim S^1\times\{\pm 1\}\subset \partial^\pm N$, such that $N'$ is obtained by cutting $N$ along $C$ and gluing two copies of $D^2\times [-1,1]$ to the resulting boundary cylinders in $N\setminus C$. We then write $N\xrightarrow{C} N'$. We say that a balanced $3$-manifold $N$ admits a {\emph{simplifier}} if there is a sequence of simplifications 
\[N=N_n\xrightarrow{C_n}N_{n-1}\xrightarrow{C_{n-1}}\cdots\xrightarrow{C_2}N_1\xrightarrow{C_1}N_0,\]
such that $N_0$ is a disjoint union of copies of $S^2\times [-1,1]$. The inverse of a simplification is called an {\emph{anti-simplification}}. Two balanced $3$-manifolds are called {\emph{equivalent}} if they may be changed to one-another by a finite sequence of simplifications, anti-simplifications and homeomorphisms. The equivalence of the balanced $3$-manifolds $N$ and $N'$ implies that $\pfrak_\alpha(N)=\pfrak_\alpha(N')$ and $\pfrak_\beta(N)=\pfrak_\beta(N')$.  Therefore, a pair of well-defined EAC equivalence classes (of group presentations) are assigned to each equivalence class of balanced $3$-manifolds and in this sense, the equivalence notion between balanced $3$-manifolds is {\emph{weaker}} than the EAC equivalence for group presentations. In the family of simple balanced $3$-manifolds, both EAC equivalence classes are presentations of the trivial group. Motivated by the SAC conjecture, it is thus natural to ask if there is a simple balanced $3$-manifold $N$ which is not equivalent to the trivial simple balanced $3$-manifold $S^2\times [-1,1]$? In this paper, we combine the main result of \cite{NB} with tools from Heegaard Floer theory (see \cite{OS}) to prove the following theorem.

\begin{theorem}\label{thm3}
	There is a simple balanced $3$-manifold $N$ with
	\[\pfrak_\alpha(N)=\pfrak_\beta(N)=P_0=[(X_0,R_0)],\]
where $P_0$ is given in (\ref{eq-AC-example}),	which is not equivalent to $S^2\times I$.
\end{theorem}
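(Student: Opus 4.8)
The plan is to prove Theorem~\ref{thm3} in three stages: realize $P_0$ by an explicit simple balanced $3$-manifold $N$, attach to the equivalence class of $N$ a Heegaard Floer invariant that is visibly unchanged under homeomorphisms, simplifications and anti-simplifications, and then compute this invariant to separate $N$ from $S^2\times I$. What makes the statement delicate is that every \emph{classical} invariant of $N$ agrees with that of the trivial class: by construction $\pfrak_\alpha(N)=\pfrak_\beta(N)=P_0$ presents the trivial group, so $\pi_1(N)/H^\pm$, the homology, and the abelianized data of any reasonable closure coincide with those of $S^2\times I$. The separation must therefore come from a genuinely Floer-theoretic quantity, and the function of the main result of \cite{NB} is to guarantee that such a quantity descends to the equivalence class.

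For the realization I would exploit the symmetry of $P_0$: the substitution $x\leftrightarrow y$ interchanges $r$ and $s$, so $P_0$ carries an involution swapping its two generators with its two relators. I would build a Heegaard diagram $\Hcal=(\Sigma,\alphas,\betas)$ for a balanced $3$-manifold that carries this as an orientation-preserving involution $\tau$ of $\Sigma$ with $\tau(\alphas)=\betas$ and $P_\alpha(\Hcal)=P_0$. Since $\tau$ exchanges the two curve systems, the presentations $P_\alpha(\Hcal)$ and $P_\beta(\Hcal)$ agree up to relabelling, forcing $\pfrak_\alpha(N)=\pfrak_\beta(N)=[P_0]$, and the curves can be arranged so that $N$ is simple, as in \cite{NB}. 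Concretely I would take $\Sigma$ of genus $G=g+2$ with two $\alpha$- and two $\beta$-curves spelling $r$ and $s$ and $g$ further handles that produce the genus-$g$ boundary surfaces $\partial^\pm N$. Keeping $g\ge 1$ is essential: with $g=0$ the two boundary spheres cap off to the presentation $3$-manifold of $P_0$, which, as $P_0$ presents the trivial group, is $S^3$ and records nothing.

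For the invariant I would use a closed oriented $3$-manifold $Y_N$ associated to $N$ whose construction retains the full $\pi_1(N)$ rather than the quotient $\pi_1(N)/H^\pm$: complete $\alphas$ and $\betas$ to full systems on $\Sigma$ by adjoining $g$ further curves $\gammas^\pm$ realizing the two cappings, and let $Y_N$ be the resulting genus-$G$ Heegaard three-manifold, so that $\pi_1(Y_N)=\langle\,\alphas,\gammas^+\mid\betas,\gammas^-\,\rangle$ is a homology sphere group enlarging $P_0$ by the boundary handles. The main result of \cite{NB} is what I would invoke to show that a simplification (cut along a vertical annulus, cap by $D^2\times I$) changes $Y_N$ only by removing an $S^1\times S^2$ connected summand, so that $\widehat{HF}(Y_N)$ modulo tensoring by $V=\widehat{HF}(S^1\times S^2)$ --- equivalently its reduced total rank together with the $d$-invariants over $\SpinC(Y_N)$ --- is an invariant of the equivalence class, trivial for $S^2\times I$. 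Reading the genus-$G$ diagram off $\Hcal$, I would then identify a surgery presentation of $Y_N$ and compute $\widehat{HF}(Y_N)$ by the surgery exact triangle (or, after a symmetry-aided isotopy, by directly counting $\alphas\cap\betas$ generators in an admissible diagram), exhibiting reduced rank strictly larger than that of $S^3$.

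The main obstacle is this final computation, for two intertwined reasons. First, because $H_*(Y_N)$ and $\pi_1(N)/H^\pm$ coincide with the trivial case, nonvanishing cannot be detected homologically and must be read from the full chain complex; I expect $\pi_1(Y_N)$ to be a nontrivial homology-sphere group governed by $x^{-1}y^2x=y^{3}$, $y^{-1}x^2y=x^{3}$ together with the boundary handles, and one must prove the associated Floer homology has rank $>1$ even after the stabilization ambiguity is divided out. Second, one must be certain that the chosen Floer quantity is insensitive to the auxiliary choices entering $Y_N$ (the curves $\gammas^\pm$, the gluing); this is exactly where \cite{NB} is used, and checking that the simplification move corresponds to the controlled operation $\otimes V$ rather than to an arbitrary change of $\widehat{HF}$ is the technical heart of the argument. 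I would mitigate the computation by choosing the $\tau$-symmetric diagram so that the intersection pattern of $\alphas$ and $\betas$ is admissible and nearly computes $\widehat{HF}$ on the nose, reducing the estimate to a finite, symmetry-assisted count.
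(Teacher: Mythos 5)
Your overall skeleton (realize $P_0$ by a diagram, pass to a closed $3$-manifold, use Heegaard Floer theory to separate $N$ from $S^2\times I$) matches the paper, but the middle step --- the invariant you propose and the way you invoke \cite{NB} --- has a genuine gap, and it is not the route the paper takes.

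The gap is your claim that a simplification changes $Y_N$ only by splitting off an $S^1\times S^2$ summand, so that $\widehat{HF}(Y_N)$ modulo tensoring by $V$ descends to the equivalence class. Theorem~\ref{thm1} (= \cite[Theorem~1.6]{NB}) says nothing of the sort: it is purely an existence statement, namely that any balanced $3$-manifold equivalent to $S^2\times I$ admits a simplifier, i.e.\ contains an essential vertical cylinder $C$ along which one can simplify. It does not control how any closure of $N$ changes under a simplification, and for your $Y_N$ --- obtained by capping $\partial^\pm N$ with handlebodies along auxiliary curves $\gammas^\pm$ --- the effect of cutting along $C$ and regluing $D^2\times I$ is a surgery whose outcome depends on how $\partial^\pm C$ sit relative to $\gammas^\pm$; there is no reason it is a connected-sum decomposition. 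You flag this as ``the technical heart'' but supply no argument, and on top of it $Y_N$ itself depends on the choice of $\gammas^\pm$. Moreover, capping with handlebodies is exactly the operation that discards the data the argument needs: the classes $[\partial^\pm N]$ and $[C]$ in second homology.

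The paper avoids constructing any invariant of the equivalence class. It takes $N$ with genus-one boundary, and argues by contradiction: if $N$ were equivalent to $S^2\times I$, Theorem~\ref{thm1} produces an essential cylinder $C$. One then forms the closed manifold $M=N_f$ by gluing $\partial^+N$ to $\partial^-N$ directly (via the homologically determined homeomorphism $f$), so that $C$ closes up to a torus $\bar C$ whose class in $H_2(M;\Z)\cong\Z^3$ is linearly independent from $[\partial^+N]$ and has vanishing Thurston norm. Theorem~\ref{thm:OS} (twisted $\underline{\widehat{HF}}$ detects the Thurston norm) together with the exhibition of two independent $\SpinC$ structures $\spinc_1,\spinc_2$ with $\langle c_1(\spinc_i),[\partial^+N]\rangle=0$ and $\underline{\widehat{HF}}(M,\spinc_i)\neq 0$ forces every norm-zero class orthogonal in this sense to be a multiple of $[\partial^+N]$, contradicting the independence of $[\bar C]$. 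Here Floer homology is used only once, as a computational tool for the Thurston norm of a single fixed manifold $M$; no statement about how Floer homology transforms under simplification is ever needed. If you want to salvage your approach you would have to prove your invariance claim from scratch, which is precisely the hard open issue the paper's detour through the Thurston norm is designed to sidestep.
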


As mentioned above, besides Heegaard Floer theory, the main tool used in proving Theorem~\ref{thm3} is a fundamental result about the equivalence class of the simple balanced $3$-manifold $S^2\times [-1,1]$, which is proved in \cite{NB} and  may be stated as follows.

\begin{theorem}\cite[Theorem~1.6]{NB}\label{thm1}
	Every balanced $3$-manifold $N$  which is equivalent to $S^2\times I$ admits a simplifier.
\end{theorem}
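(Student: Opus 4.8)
The plan is to treat the simplification moves as a rewriting system on balanced $3$-manifolds, considered up to homeomorphism, and to prove the statement by a confluence argument in the spirit of Newman's lemma. Equivalence to $S^2\times I$ means that $N$ is joined to a disjoint union of copies of $S^2\times I$ by a finite zig-zag of simplifications and anti-simplifications, so the real content of the theorem is that all the upward (anti-simplification) moves can be eliminated, leaving a purely downward sequence, i.e. a simplifier. I would therefore show that the simplification relation $N\to N'$ is \emph{terminating} and \emph{locally confluent}, so that by Newman's lemma it is confluent and every balanced $3$-manifold acquires a well-defined normal form. Since a disjoint union of copies of $S^2\times I$ is a normal form and is visibly the normal form of $S^2\times I$ itself, and since $N$ and $S^2\times I$ generate the same two-sided equivalence, confluence forces them to share this normal form; that shared normal form is then the terminus of a simplifier for $N$.

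For termination I would use the total genus $c(N)=\sum_i g_i$ of the positive boundary components as the primary complexity. A simplification cuts $N$ along an embedded cylinder $C$ and caps the two resulting annuli with copies of $D^2\times I$, which amounts to a simultaneous pair of compressions along $\partial^-C$ and $\partial^+C$; when these curves are essential the move strictly lowers $c(N)$, so no infinite descending chain of essential simplifications can occur. The genus-preserving case, in which $\partial^\pm C$ bound disks in $\partial^\pm N$, only splits off or absorbs $S^2\times I$ summands and does not disturb the bottom stratum, so it can be folded into the normal-form bookkeeping or avoided by restricting attention to essential simplifications.

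The heart of the argument is local confluence: given two simplifications $N\xrightarrow{C_1}N_1$ and $N\xrightarrow{C_2}N_2$, I must produce a common descendant. After an isotopy making $C_1$ and $C_2$ transverse, their intersection is a finite union of arcs and circles, and using innermost-disk and outermost-arc surgeries on $C_1$, $C_2$, and the capping disks inside $N$, I would replace the pair by disjoint cylinders $C_1',C_2'$ realizing the same moves, after which the two simplifications manifestly commute to a common $N_3$. The delicate points, and where I expect the main obstacle, are (i) verifying that the surgered cylinders still run from $\partial^-N$ to $\partial^+N$ and still induce the same manifolds up to homeomorphism, and (ii) handling the configurations in which $C_1$ and $C_2$ genuinely cannot be made disjoint because the compressing disk of one essentially links the other; these must be resolved by exhibiting an explicit finite sequence of intermediate simplifications rather than a single commuting square. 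Once local confluence is checked in all such cases, Newman's lemma closes the argument and shows that $N\sim S^2\times I$ forces $N$ to admit a simplifier.
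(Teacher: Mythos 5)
The paper does not prove Theorem~\ref{thm1} at all: it is imported verbatim from \cite[Theorem~1.6]{NB} and used as a black box, so there is no internal argument to compare yours against. Judged on its own, your proposal correctly identifies the logical shape of the statement (a zig-zag to $S^2\times I$ must be replaced by a monotone descent, which is exactly a confluence assertion), but both inputs to Newman's lemma are left unestablished, and the second one is where the entire content of the theorem lives.

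On termination: the measure you propose, total genus of $\partial^+N$, does not strictly decrease under every essential simplification. If $\partial^+C$ is essential but \emph{separating} in $\partial^+N$, compressing splits a genus-$g$ component into components of genera $g_1,g_2\geq 1$ with $g_1+g_2=g$, so the total genus is unchanged; you need at least a lexicographic refinement (e.g.\ total genus, then the number of positive-genus components) and a check that cutting along such a $C$ actually leaves a balanced manifold. More seriously, inessential simplifications can be iterated forever ($S^2\times I$ simplifies to two copies of itself), so once you restrict to essential moves you have changed the rewriting system, and you must separately prove that a zig-zag of the original equivalence --- which may use inessential simplifications and anti-simplifications --- can be traded for one in the restricted system. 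This is asserted (``folded into the bookkeeping'') but not argued.

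On local confluence: this is the theorem, and the sketched mechanism does not work. Innermost-disk and outermost-arc surgery is the standard tool for making \emph{incompressible} surfaces disjoint in \emph{irreducible} manifolds; here the surfaces are annuli and the ambient manifolds are typically reducible. A circle of $C_1\cap C_2$ that is core-parallel in both annuli bounds a disk in neither, so it cannot be removed by innermost-disk surgery; surgering along an outermost bigon can convert a spanning cylinder into a boundary-parallel annulus, or into an annulus with both ends on $\partial^+N$, which is no longer a legal simplifying cylinder; and even when $C_1$ and $C_2$ are disjoint the two moves need not close up into a commuting square if the cylinders co-bound or if one cut disconnects the manifold in a way that makes the other cylinder inessential. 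You flag these as ``delicate points,'' but they are not delicate points of an otherwise complete argument --- they are the argument. As it stands the proposal is a plausible strategy outline with a genuine gap at its central step, not a proof.
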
 

\begin{figure}
\def\svgwidth{0.85\textwidth}
{\scriptsize{
\begin{center}
\begingroup%
  \makeatletter%
  \providecommand\color[2][]{%
    \errmessage{(Inkscape) Color is used for the text in Inkscape, but the package 'color.sty' is not loaded}%
    \renewcommand\color[2][]{}%
  }%
  \providecommand\transparent[1]{%
    \errmessage{(Inkscape) Transparency is used (non-zero) for the text in Inkscape, but the package 'transparent.sty' is not loaded}%
    \renewcommand\transparent[1]{}%
  }%
  \providecommand\rotatebox[2]{#2}%
  \newcommand*\fsize{\dimexpr\f@size pt\relax}%
  \newcommand*\lineheight[1]{\fontsize{\fsize}{#1\fsize}\selectfont}%
  \ifx\svgwidth\undefined%
    \setlength{\unitlength}{444.46465743bp}%
    \ifx\svgscale\undefined%
      \relax%
    \else%
      \setlength{\unitlength}{\unitlength * \real{\svgscale}}%
    \fi%
  \else%
    \setlength{\unitlength}{\svgwidth}%
  \fi%
  \global\let\svgwidth\undefined%
  \global\let\svgscale\undefined%
  \makeatother%
  \begin{picture}(1,0.3666479)%
    \lineheight{1}%
    \setlength\tabcolsep{0pt}%
    \put(0,0){\includegraphics[width=\unitlength,page=1]{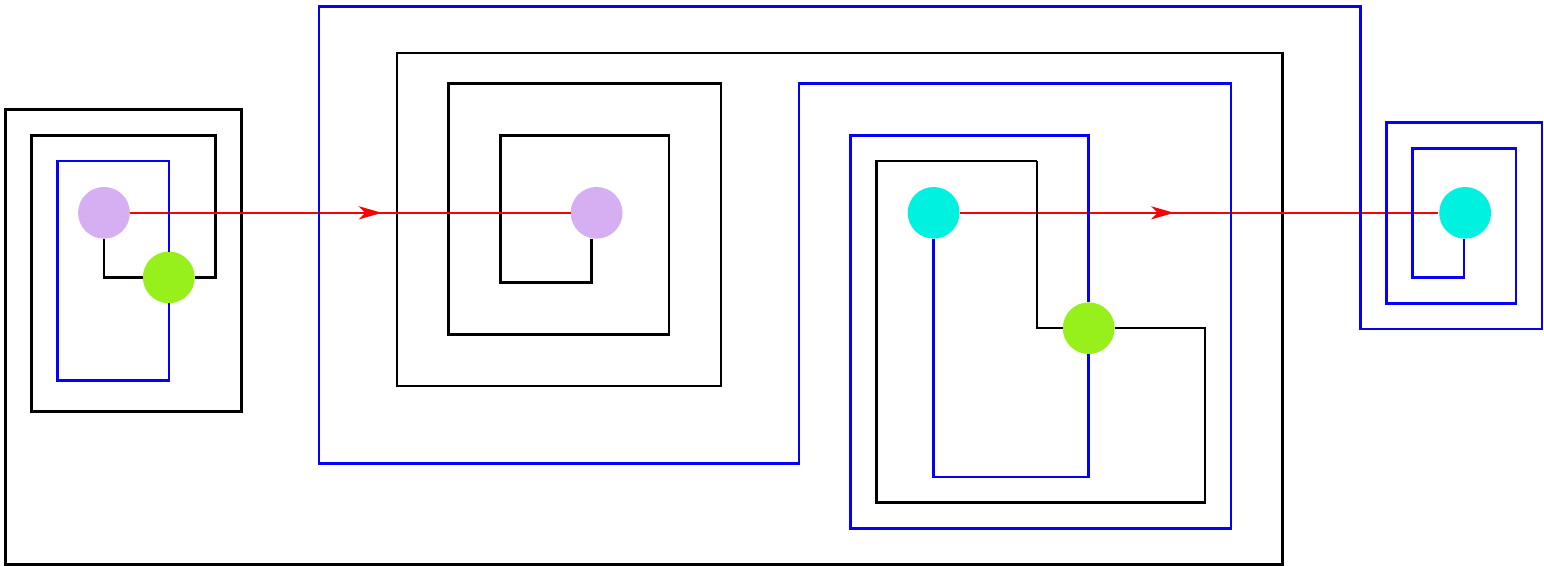}}%
    \put(0.0412986,0.1296078){\color[rgb]{0,0,0}\makebox(0,0)[lt]{\lineheight{0}\smash{\begin{tabular}[t]{l}$\beta_2$\end{tabular}}}}%
    \put(0.16785427,0.23591545){\color[rgb]{0,0,0}\makebox(0,0)[lt]{\lineheight{0}\smash{\begin{tabular}[t]{l}$\alpha_1$\end{tabular}}}}%
    \put(0.72807855,0.234228){\color[rgb]{0,0,0}\makebox(0,0)[lt]{\lineheight{0}\smash{\begin{tabular}[t]{l}$\alpha_2$\end{tabular}}}}%
    \put(0.00923755,0.00980086){\color[rgb]{0,0,0}\makebox(0,0)[lt]{\lineheight{0}\smash{\begin{tabular}[t]{l}$\beta_1$\end{tabular}}}}%
    \put(0,0){\includegraphics[width=\unitlength,page=2]{HD-1.pdf}}%
  \end{picture}%
\endgroup%

\caption{\label{ex-1} The Heegaard surface is a surface of genus three which is obtained by identifying the boundaries of disks with the same color. The curves are oriented in a way that the balanced presentation associated with this Heegaard diagram is $P_0$.}
\end{center}}}
\end{figure}

The group presentation $P_0$ of Equation~\ref{eq-AC-example} is realized by the Heegaard diagram 
\[\overline{\Hcal}=(\bar{\Sigma},\boldsymbol{\bar{\alpha}}=\{\alpha_1,\alpha_2\},\boldsymbol{\bar{\beta}}=\{\beta_1,\beta_2\}),\]
illustrated in Figure \ref{ex-1}. In fact, the Heegaard diagram $\overline{\Hcal}$ determines a simple balanced $3$-manifold $N$ with $\mathfrak{p}_\alpha(N)=\mathfrak{p}_\beta(N)=[P_0]$. If $N$ is equivalent to $S^2\times I$, Theorem \ref{thm1} implies that $N$ admits a  simplifier. We have $\partial N=\partial^+N\amalg -\partial^- N$ where $\partial^\pm N$ are surfaces of genus $1$. If $N$ admits a simplifier, there is a nontrivial cylinder $C$ in $N$ such that $\partial^\pm C$ in $\partial^\pm N$  are essential curves. Let $f:\partial^+N\rightarrow\partial^-N$  be  the homeomorphism from $\partial^+N$ to $\partial^-N$ which makes the following diagram commutative:
\begin{diagram}
H_1(\partial^+ N,\Z)& &\rTo{f_*}&&H_1(\partial^-N,\Z) \\
&\rdTo{\iota^+_*}&&\ldTo{\iota^-_*}&\\
&&H_1(N,\Z)&&
\end{diagram}
This criteria determines $f$ upto isotopy. Since $\partial^+C$ is homologous to $\partial^-C$, we may further assume that $f$ maps $\partial^+C$ to $\partial^-C$. Let $N_f$ denote the closed $3$-manifold obtained from $N$ by identifying $\partial^+N$ with $\partial^-N$ using $f$. Let $\bar{C}$ denote the torus in $M$ which is obtained from $C$ by identifying $\partial^+C$ with $\partial^-C$. Thus $\bar{C}$ and $\partial^+N\sim_f\partial^-N$  represent linearly independent  homology classes in $H_2(N_f,\Z)=\Z\oplus\Z\oplus \Z$ with zero Thurston semi-norm. Recall that the Thurston semi-norm of a closed $3$-manifold $M$ is defined on $H_2(M,\Z)$ by
\[\Theta:H_2(M,\Z)\rightarrow\Z^{\geq0},\quad\quad \Theta(\xi):=\min\left\{\chi_+(\Sigma)\ \big|\ \Sigma\hookrightarrow M\ \ \text{and}\ \ [\Sigma]=\xi\right\},\]
where the minimum is taken over all compact, oriented surfaces $\Sigma=\amalg_i\Sigma_i$ embedded in $M$ and representing the homology class $\xi$, while $\chi_+(\Sigma)$ is defined by $\sum_{\substack{g(\Sigma_i)>0}}(2g(\Sigma_i)-2)$ (see \citep{Thurston}). Heegaard Floer homology groups with twisted coefficients detect the Thurston semi-norm. More precisely, for a closed $3$-manifold $M$, let $\underline{\widehat{HF}}(M)$ denote the Heegaard Floer homology group of $M$ with twisted coefficients, which is a  $\Z/2\Z$-graded $\Z_2[H^1(M,\Z)]$-module defined in \citep{OS}. There is a decomposition of this group by $\SpinC$ structures,
\[\underline{\widehat{HF}}(M)=\bigoplus_{{\substack{\spinc\in\SpinC(M)}}}\underline{\widehat{HF}}(M,\spinc).
\]
\begin{theorem}\label{thm:OS}\citep[Theorem~1.1]{OS4}
	For a closed $3$-manifold $M$ and $\xi\in H_2(M,\Z)$,
	\[\Theta(\xi)=\max_{\substack{\left\{\spinc\in\SpinC(M)\ |\ \underline{\widehat{HF}}(M,\spinc)\neq0\right\}}}|\langle c_1(\spinc),\xi\rangle|.
	\]
\end{theorem}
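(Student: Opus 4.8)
The plan is to prove the two inequalities
$\Theta(\xi)\geq\max|\langle c_1(\spinc),\xi\rangle|$ and
$\Theta(\xi)\leq\max|\langle c_1(\spinc),\xi\rangle|$ separately, where in both cases the maximum ranges over those $\spinc\in\SpinC(M)$ with $\underline{\widehat{HF}}(M,\spinc)\neq0$. The first inequality is the \emph{adjunction inequality} and is the soft direction; the second is the \emph{sharpness} statement, and it carries essentially all of the difficulty.

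For the adjunction inequality I would fix a norm-minimizing embedded surface $\Sigma=\amalg_i\Sigma_i$ with $[\Sigma]=\xi$ and $\chi_+(\Sigma)=\Theta(\xi)$, discarding the sphere components so that every $\Sigma_i$ has genus $g_i\geq1$. The heart of the matter is the twisted adjunction inequality asserting that $\underline{\widehat{HF}}(M,\spinc)\neq0$ forces $|\langle c_1(\spinc),[\Sigma_i]\rangle|\leq 2g_i-2$ on each component. To prove this I would choose a Heegaard diagram subordinate to $\Sigma_i$, i.e. one in which $\Sigma_i$ is visibly assembled from the Heegaard surface, and then bound the $\spinc$-evaluation of every intersection point $\mathbf{x}$ of the tori $\mathbb{T}_\alpha$ and $\mathbb{T}_\beta$ in the symmetric product: a positivity-of-intersection argument shows each such $\mathbf{x}$ has $|\langle c_1(\spinc(\mathbf{x})),[\Sigma_i]\rangle|\leq 2g_i-2$, whence the same bound holds for any $\spinc$ supporting nonzero homology. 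Summing over components with the triangle inequality yields $|\langle c_1(\spinc),\xi\rangle|\leq\sum_i(2g_i-2)=\chi_+(\Sigma)=\Theta(\xi)$, as desired.

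For sharpness I would produce a single $\spinc$ that simultaneously realizes the Thurston norm and has nonvanishing twisted homology, and the natural route is through taut foliations. By Gabai's sutured-manifold theory there is a taut foliation $\mathcal{F}$ of $M$ in which the norm-minimizing surface appears as a union of leaves, so that Thurston's inequality is sharp and $|\langle e(\mathcal{F}),\xi\rangle|=\Theta(\xi)$, where $e(\mathcal{F})$ is the Euler class of the tangent plane field. Perturbing $\mathcal{F}$ by the Eliashberg--Thurston theorem produces a contact structure that is weakly symplectically fillable, and the Ozsv\'ath--Szab\'o nonvanishing theorem for the contact invariant of a weakly fillable contact structure---necessarily phrased with twisted coefficients---then gives $\underline{\widehat{HF}}(M,\spinc_{\mathcal{F}})\neq0$ for the $\spinc$ structure $\spinc_{\mathcal{F}}$ of the contact plane field. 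Since $c_1(\spinc_{\mathcal{F}})=e(\mathcal{F})$, this structure satisfies $|\langle c_1(\spinc_{\mathcal{F}}),\xi\rangle|=\Theta(\xi)$, so the maximum is at least $\Theta(\xi)$.

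The main obstacle is precisely this sharpness step, and it is also where the twisted coefficients become indispensable: taut foliations only yield \emph{weakly} fillable contact structures, and weak fillability controls only the \emph{twisted} contact invariant, so an untwisted version of the statement would simply fail. Carrying the argument out requires the full Eliashberg--Thurston perturbation together with Eliashberg's construction of the weak filling, careful bookkeeping to confirm that the $\spinc$ structure of the filling is indeed $\spinc_{\mathcal{F}}$, and separate direct treatment of the degenerate cases (for example $M\cong S^2\times S^1$, or classes $\xi$ of zero norm) in which no taut foliation with a compact leaf of the required type is available.
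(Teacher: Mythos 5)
This statement is quoted from \cite{OS4} and the paper offers no proof of its own, so there is nothing internal to compare against; your sketch is instead to be measured against the argument of Ozsv\'ath and Szab\'o. Your outline reproduces that argument faithfully: the easy direction is the (twisted) adjunction inequality, proved by evaluating $c_1(\spinc(\mathbf{x}))$ on the class of a norm-minimizing surface made visible in a suitable Heegaard diagram and bounding the result by $2g_i-2$ generator by generator, and the hard direction is sharpness via Gabai's taut foliation containing the norm-minimizing surface as a leaf, the Eliashberg--Thurston perturbation to a weakly fillable contact structure, Eliashberg's capping of the weak filling, and the nonvanishing of the \emph{twisted} contact invariant, with $c_1(\spinc_{\mathcal{F}})=e(\mathcal{F})$ evaluating to $\pm\Theta(\xi)$. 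You also correctly identify why twisted coefficients are essential and that the degenerate cases (norm-zero classes, $S^1\times S^2$ summands) need separate handling, so as a high-level reconstruction of the cited proof this is accurate.
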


Let us consider the case where $M=N_f$ is given as above. Extend $[\partial^+ N]$ to a basis for $H_2(N_f,\Z)\cong\Z^3$ and consider a corresponding identification of $\SpinC(M)$ with $\Z^3$ (by evaluation of the first Chern class of the $\SpinC$ structures over the generators of the homology group $H_2(M,\Z)\cong\Z^3$). In order to prove Theorem~\ref{thm3}, we show that there are two linearly independent $\SpinC$ structures $\spinc_1$ and $\spinc_{2}$, with the property that 
\[\langle c_1(\spinc_i),[\partial^+ N]\rangle=0\quad\quad\text{and}\quad\quad  \underline{\widehat{HF}}(M,\spinc_i)\neq 0\quad\quad\text{for}\ \ i=1,2.\]
 Since $\Theta([\bar{C}])=0$, we thus have $[\bar{C}]=\lambda[\partial^+ N]$, for some integer $\lambda$, which contradicts our assumption. This shows that $N$ does not have a simplifier and is thus not equivalent to $S^2\times I$.\\
 
A remark about the above argument may be appropriate here. Let us assume that $N_1$ and $N_2$ are balanced $3$-manifolds and that $N_1$ simplifies to $N_2$, i.e. $N_1\xrightarrow{C}N_{2}$. For $k=1,2$, let $M_k$ denote the closed $3$-manifold obtained by taking two copies $N_k^1$ and $N_k^2$ of $N_k$, identifying $\partial^+N_k^1$ with $\partial^+N_k^2$ and identifying $\partial^-N_k^1$ with $\partial^-N_k^2$. Then the cylinder $C$ gives the torus $T\subset M_1$, while $M_{2}$ is obtained by cutting $M_1$ along $T$ and gluing two solid tori to the resulting boundary components. Theorem~\ref{thm:OS} is then helpful in detecting $T$. Nevertheless, the equivalence of $N_1$ and $N_2$ is yet not well-translated to Heegaard Floer theory, e.g. to a practical correspondence between $\underline{\widehat{HF}}(M_1)$ and $\underline{\widehat{HF}}(M_{2})$. If $T$ is $2$-sided, the problem is studied in \cite{E-splicing} and \cite{E-essential}, and a relatively powerful machinery is developed in \cite{HRW}. For non-separating $T$,  it is interesting to develop such a correspondence.\\

{\bf{About the proof}}. In Section~\ref{sec-HD}, we construct a Heegaard diagram $\mathcal{H}$ for the closed manifold $M$ from $\overline{\Hcal}$, following the approach of \cite{lekili}. The number of generators for the Heegaard diagram $\mathcal{H}$ is $7936$, and it is thus not feasible to find the $\SpinC$ structures $\spinc_1$ and $\spinc_2$ and compute the groups $\underline{\widehat{HF}}(M,\spinc_i)$ without computational assistance (from computers). We prove a simple lemma from linear algebra in Section~\ref{Computation}, in the sprit of the general discussion in \cite[Section 2]{E-splicing}. The lemma is used, in combination with a computer program, to obtain a short-list of potential $\SpinC$ structures $\spinc$ with $\underline{\widehat{HF}}(M,\spinc)\neq 0$ (although obtaining the short-list is not an official part of our argument). Among the potential candidates, two specific $\SpinC$ structures $\spinc_1$ and $\spinc_2$ are considered in Section~\ref{Computation-1} and Section~\ref{Computation-2}. The chain complexes associated with these $\SpinC$ structures are $8$-dimensional and $72$-dimensional, respectively. The homology groups of the chain complexes $\underline{\widehat{CF}}(M,\spinc_i)$ (for $i=1,2$) are studied using the lemma proved in Section~\ref{Computation}, a series of computer assisted computations and explicit computations of the contribution of moduli spaces associated with certain classes of Whitney disks. Since the Heegaard diagram is not nice (in the sense of \cite{Sarkar-Wang}), such explicit computations are necessary and appear in Section~\ref{sec:disks}. 
\newpage

\section{A Heegaard diagram for the mapping torus}\label{sec-HD}
In this section, we obtain a Heegaard diagram for $M=N_f$, using the construction of \citep{lekili}. Let us assume that the diagram $\overline{\Hcal}$ is obtained from a Morse function $h:N\rightarrow[-1,1]$. Then $h$ gives a circle-valued Morse function $\bar{h}:M\rightarrow S^1$ with two critical points $x_1$ and $x_2$ of index $1$ and two critical points $y_1$ and $y_2$ of index $2$, such that
\begin{align*}
N_{x_1}^u(h)\cap\Sigma=\alpha_1,\quad\quad N_{x_2}^u(h)\cap\Sigma=\alpha_2,\quad\quad &N_{y_1}^s(h)\cap\Sigma=\beta_1,\quad\quad \ N_{y_2}^s(h)\cap\Sigma=\beta_2,\\
\bar{h}^{-1}(1)=\partial^+N\sim_f\partial^-N=\Sigma_{min}\quad\quad\quad\quad\quad\quad&\text{and}\quad\quad\quad\quad\quad\quad\bar{h}^{-1}(-1)=\bar\Sigma=\Sigma_{max}.
\end{align*}
Here $N_x^s$ and $N_x^u$ denote the stable and unstable manifold of $x\in N$ with respect to the flow of a gradient-like vector field for $h$. Following \citep{lekili}, let $p_1$ and $p_2$ be  disjoint points in $\bar\Sigma\setminus\alphas\cup\betas$ and $\gamma_1$ and $\gamma_2$ denote two gradient flow lines disjoint from $N^u_{x_i}$ and $N^s_{y_i}$ such that 
\[\gamma_i\cap\bar\Sigma=\{p_i\}\quad\text{and} \quad\gamma_i\cap\partial^+N=\{\bar{p}_i\},\quad\quad\text{for} \ \ i=1,2.\] 
Furthermore, $\gamma_1$ (resp. $\gamma_2$) is mapped onto the northern (resp. southern) semi-circle of $S^1$. Let $N(\gamma_i)$, $i=1,2$, denote the normal neighborhood of $\gamma_i$ that intersects $\bar\Sigma$ and $\partial^+N$ in the small disks $D_{p_i}$ and $D_{\bar{p}_i}$, respectively. By removing $D_{p_i}^\circ$ and $D_{\bar{p}_i}^\circ$ and gluing $\partial D_{p_i}$ to $\partial D_{\bar{p}_i}$ along $\partial N(\gamma_{i})$ we obtain the Heegaard surface $\Sigma$. Let $\alpha_{5}=\partial D_{\bar{p}_1}$ and $\beta_5=\partial D_{\bar{p}_2}$. Let $\alpha_3'$ and $\alpha_{4}'$ (resp. $\beta_3'$ and $\beta_4'$),  be disjoint arcs in $\partial^+N$ such that $\partial \alpha_3'$ and $\partial \alpha_{4}'$ are disjoint points on $\beta_5$ and $\partial \beta_3'$ and $\partial \beta_4'$ are disjoint points on $\alpha_{5}$, while $|\alpha_3'\cap \beta_4'|=|\alpha_4'\cap \beta_3'|=1$ and $|\alpha_3'\cap \beta_3'|=|\alpha_{4}'\cap \beta_4'|=0$.  Flowing the arcs $\beta_3'$ and $\beta_4'$ through the gradient flow of $\bar{h}$ above the northern semi-circle, we obtain disjoint arcs $\beta_3''$ and $\beta_4''$ in $\Sigma\setminus\partial^+N$ which are disjoint from $\beta_1$ and  $\beta_2$. Similarly, flowing the arcs $\alpha_3'$ and $\alpha_{4}'$, we obtain  $\alpha_3''$ and $\alpha_{4}''$ which are disjoint from $\alpha_1$ and $\alpha_2$. This determines the sets of $\alpha$ and $\beta$ curves:
\[\alphas=\{\alpha_1,\alpha_2,\alpha_3=\alpha_3'\cup \alpha_3'',\alpha_{4}=\alpha_{4}'\cup \alpha_{4}'',\alpha_{5}\}\ \ \text{and}\ \ \betas=\{\beta_1,\beta_2,\beta_3=\beta_3'\cup \beta_3'',\beta_4=\beta_4'\cup \beta_4'',\beta_5\}.\]

\begin{figure}
	\begin{center}
		\def\svgwidth{0.8\textwidth}
		\fontsize{8}{10}\selectfont
		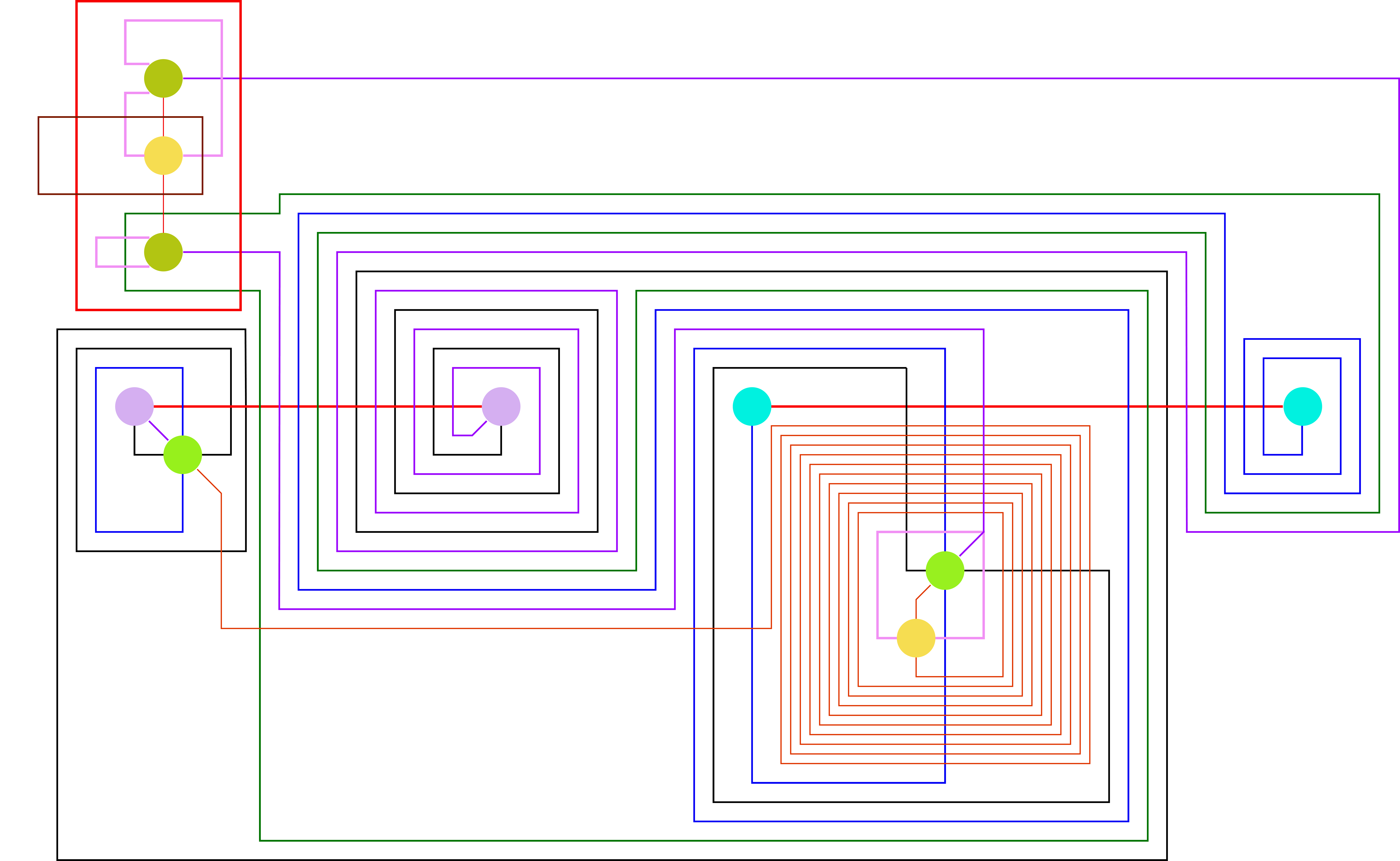	
		\caption{A weakly admissible Heegaard diagram of $N_f$ with $21392$ generators}
		\label{ex-3}
	\end{center}
\end{figure}

\begin{figure}
	\begin{center}
		\def\svgwidth{\textwidth}
		\fontsize{7}{10}\selectfont
		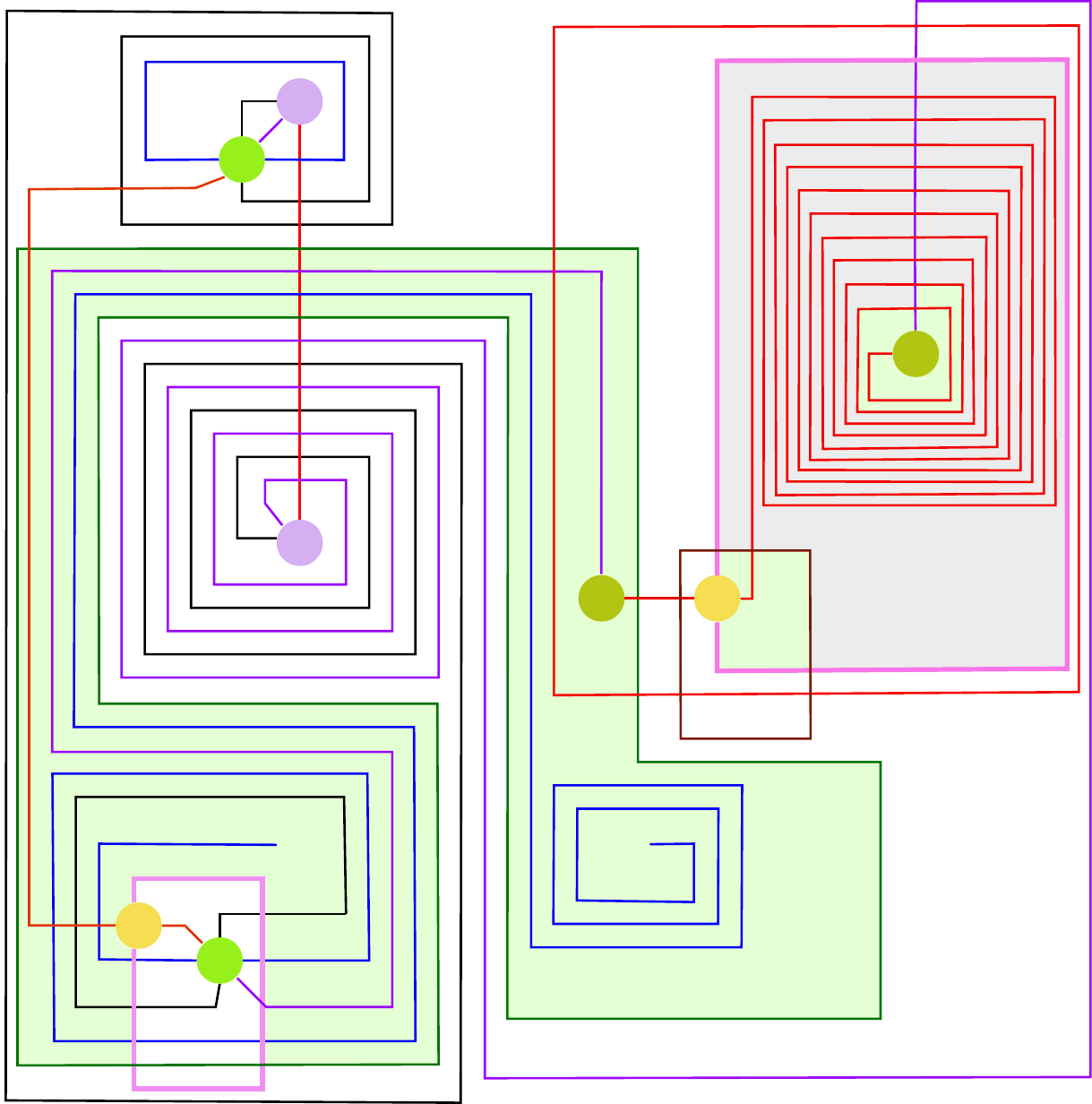	
		\caption{A weakly admissible Heegaard diagram for $M$ with $7936$ generators. The connected components of $\Sigma\setminus{\alphas\cup\betas}$ are labeled  $D_i$, for $i=0,\ldots,67$.  The periodic domains are generated by $P_1=-D_{52}+\sum_{i=53}^{57}D_{i}+\sum_{i\in I_1} D_i$, $P_2=-D_{49}+\sum_{i\in I_2}D_{i}+\sum_{i\in I_1} D_i$ and a third periodic domain $P_3$, where $D_i$ is colored gray for $i\in I_1$ and green for $i\in I_2$. We have $\partial_b(P_1)=-\beta_5$ and $\partial_b(P_2)=\beta_3$. The periodic domain $P_3$ may be chosen so that $\partial_b(P_3)=\beta_4+2\beta_3-3\beta_1+2\beta_2$.}
			\label{ex-4}
		\end{center}
	\end{figure}

Having fixed a marked point $z$, finger-move  isotopies may be used to make $(\Sigma,\alphas,\betas,z)$ weakly admissible. If we apply the procedure to the Heegaard diagram of Figure~\ref{ex-1}, we arrive at the admissible Heegaard diagram illustrated in Figure~\ref{ex-3} with $21392$ generators. Handle-slides of $\alpha_{4}$ over $\alpha_3$ ($10$ times) and isotopies on  $\alpha_3$ give an alternative  (more suitable) weakly admissible Heegaard diagram $\mathcal{H}$ with $7936$ generators,  as illustrated in Figure \ref{ex-4}.  We use  $x_{i,j,k}$ to label the intersection point of $\alpha_i$ and $\beta_j$ which is labeled $k$ in the diagram of Figure~\ref{ex-4}.\\

	The set of periodic domains for $\Hcal$ is generated by three domains $P_1$, $P_2$ and $P_3$. The first two generators are shown in Figure \ref{ex-4}. The periodic domains $P_1$ and $P_2$ are of the form
\begin{align*}
P_1=-D_{52}+\sum_{i=53}^{57}D_{i}+\sum_{i\in I_1} D_i\quad\text{and}\quad P_2=-D_{49}+\sum_{i\in I_2}D_{i}+\sum_{i\in I_1} D_i,
\end{align*}	
where the domains $D_i$ with $i$ in $I_1$ and $I_2$ are colored gray and green in Figure~\ref{ex-4}, respectively. If $\partial_bP$ denote the $\beta$-boundary of a periodic domain $P$, we then have
$\partial_b(P_1)=-\beta_5$ and $\partial_b(P_2)=\beta_3$. We may choose the third generator $P_3$ of the space of periodic domains so that $\partial_b(P_3)=\beta_4+2\beta_3-3\beta_1+2\beta_2$. Let $H(P_i)\in H_2(M,\Z)$ denote the homology classes associated with the periodic domains $P_i$ for $i=1,2,3$, which form a basis for $H_2(M,\Z)$ (see \citep{OS}, Proposition 2.15). Correspondingly, we obtain a bijection
\begin{align*}
c:\SpinC(M)\ra \Z\oplus\Z\oplus\Z,\quad\quad c(\spinc):=\frac{1}{2}\big(\langle c_1(\spinc),H(P_1)\rangle,\langle c_1(\spinc),H(P_2)\rangle,\langle c_1(\spinc),H(P_3)\rangle\big),
\end{align*}	
which gives an identification of $\SpinC(M)$ with $\Z^3$. To compute  $\spinc_z:\mathbb{T}_{\alphas}\cap\mathbb{T}_{\betas}\rightarrow\SpinC(M)=\Z^3$ under this identification, define $\spinc^r_z:\mathbb{T}_{\alphas}\cap\mathbb{T}_{\betas}\rightarrow \Z^3$ by setting $\spinc^r_z(x_0)=(0,0,0)$ for
\begin{align*}
x_0=(x_i^0)_{i=1}^5=(x_{1,1,1},x_{2,2,1},x_{3,4,1},x_{4,5,1},x_{5,3,2}).
\end{align*}
Let $(y_i^0)_{i=1}^5$ denote a permutation of $(x_i^0)_{i=1}^5$ so that $y_i^0\in\beta_i$. Fix a connected path $\gamma_0$ on $\alphas\cup\betas$ in the diagram such that for each $\alpha\in\alphas$ and $\beta\in\betas$,  $\gamma_0\cap\alpha$ and $\gamma_0\cap\beta$ are connected and $x_i^0\in\gamma_0$, $1\leq i\leq5$, (the yellow path in Figure \ref{ex-4} satisfies these properties). Fix 
\[x=(x_i)_{i=1}^5=(y_i)_{i=1}^5\in \mathbb{T}_{\alphas}\cap\mathbb{T}_{\betas}\quad\quad \text{with}\ \  x_i\in\alpha_i\ \ \text{and}\ \  y_i\in\beta_i,\] 
i.e. $(y_i)_i$ is just a permutation of $(x_i)_i$.  Let $\epsilon(x_0,x)$ denote the closed $1$-cycle in $\Sigma$ obtained by connecting $y_i$ to $y_i^0$ through $\beta_i$,  connecting $y_i^0$ to $x_i^0$ through $\gamma_0$, and connecting $x_i^0$ to $x_i$ through $\alpha_i$ for $i=1,\ldots,5$. Note that for $j=1,2,3$, the evaluation $\langle PD[\epsilon(x_0,x)],H(P_j)\rangle$ is the algebraic intersection number of $\epsilon(x_0,x)$ with $\partial_{b} P_j$. Therefore, if we set 
\begin{align*}
\spinc^r_z(x):=\big(-\langle \epsilon(x_0,x),\beta_5\rangle,\langle \epsilon(x_0,x),\beta_3\rangle,\langle\epsilon(x_0,x),\beta_4+2\beta_3-3\beta_1+2\beta_2\rangle\big),
\end{align*}
there is a fixed triple $(a,b,c)=(0,-1,-4)
\in\Z^3$ such that $\spinc_z=\spinc^r_z(x)+(a,b,c)$. In the definition of $\spinc^r_z$, note that the  intersection numbers take place over the Heegaard surface. 
The map $\spinc_z^r$ is used instead of $\spinc_z$ for the purposes of this paper.
\section{Simplifying computations using  algorithmic calculations} \label{Computation}
All our computations are performed with coefficients in $\Z_2[H^1(M,\Z)]$. In the discussions of this section, we have the  diagram $\mathcal{H}=(\Sigma,\alphas,\betas,z)$ from Figure~\ref{ex-4} in mind. Nevertheless, the strategy works for many of the chain complexes associated with sutured manifold diagrams in the sense of \cite{Juhasz-sutured}, or even \cite{AE-sutured}.  Since there is a large number of generators associated with $\mathcal{H}$,  we break the computation of $\underline{\widehat{HF}}(M)$ into a computer-assisted part and a human part using the following observation. \\

Let  $\mathbf{z_2}\subset\mathbf{z_1}$ denote two sets of marked points containing $z$. Most of the time, we take $\mathbf{z_2}=\{z\}$. If $\mathbf{z_1}$ is sufficiently large so that it contains a marked point in each one of the periodic domains, we may choose a decomposition $\underline{\widehat{CF}}(\Sigma,\alphas,\betas,\mathbf{z_1})=A\oplus B\oplus H$, so that the differentials  $d_{\mathbf{z_1}}$ and $d_{\mathbf{z_2}}=d_{\mathbf{z_1}}+d'$ are determined by the matrices
\begin{equation}\label{eq-2}
d_{\mathbf{z_1}}=\left(\begin{array}{ccc}
	0 & 0 & 0\\ I & 0 & 0 \\ 0 & 0 & 0
\end{array}
\right)\quad\text{and}\quad
d'=\left(\begin{array}{ccc}
	f&h&m\\
	k& g&n\\
	p&q&l
\end{array}
\right).
\end{equation}

\begin{lemma}\label{HF}
	Suppose that with the above notation in place,  $I+k$ is invertible. Then 
 \[{H_*}(\underline{\widehat{CF}}(\Sigma,\alphas,\betas,\mathbf{z_2}))={H_*}(H,l+p(I+k)^{-1}n).\]
\end{lemma}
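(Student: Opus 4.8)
The plan is to read Lemma~\ref{HF} as the cancellation of an acyclic direct summand from a chain complex, which I would carry out by the homological perturbation lemma. Since all coefficients lie in $\Z_2[H^1(M,\Z)]$ we work in characteristic $2$, so every sign below disappears. Write $C=A\oplus B\oplus H$ and set $e:=I+k\colon A\to B$, which is invertible by hypothesis. Using that $d_{\mathbf{z_2}}^2=0$, I split the differential as $d_{\mathbf{z_2}}=\mathcal D+\mathcal P$, where
\[
\mathcal D=\begin{pmatrix} 0&0&0\\ e&0&0\\ 0&0&0\end{pmatrix}
\qquad\text{and}\qquad
\mathcal P=\begin{pmatrix} f&h&m\\ 0&g&n\\ p&q&l\end{pmatrix}.
\]
Here $\mathcal D$ is itself a differential whose only nonzero component is the isomorphism $e$, so $(C,\mathcal D)$ should deformation retract onto $(H,0)$, and $\mathcal P$ will be treated as a perturbation.

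First I would write down an explicit \emph{special deformation retract} of $(C,\mathcal D)$ onto $(H,0)$: the inclusion $\iota_0(\eta)=(0,0,\eta)$, the projection $\pi_0(\alpha,\beta,\eta)=\eta$, and the homotopy $h_0$ whose only nonzero block is $e^{-1}\colon B\to A$. One checks by one-line matrix computations that $\mathcal D h_0+h_0\mathcal D=\mathrm{id}_{A\oplus B}=\mathrm{id}_C-\iota_0\pi_0$ (in characteristic $2$), together with the side conditions $h_0^2=0$, $h_0\iota_0=0$ and $\pi_0 h_0=0$. Next I would verify the smallness hypothesis needed to perturb: the operator $h_0\mathcal P$ has image in $A$ and vanishes on $A$, so $(h_0\mathcal P)^2=0$; hence $\mathrm{id}-h_0\mathcal P$ is invertible with inverse $\mathrm{id}+h_0\mathcal P$.

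The homological perturbation lemma then transfers $d_{\mathbf{z_2}}=\mathcal D+\mathcal P$ to the differential $D=\pi_0\,\mathcal P\,(\mathrm{id}-h_0\mathcal P)^{-1}\iota_0$ on $H$ and produces a chain homotopy equivalence $(C,d_{\mathbf{z_2}})\simeq(H,D)$, hence an isomorphism on homology. It then remains only to evaluate $D$. Since
\[
(\mathrm{id}-h_0\mathcal P)^{-1}\iota_0=(\mathrm{id}+h_0\mathcal P)\iota_0=\begin{pmatrix} e^{-1}n\\ 0\\ \mathrm{id}_H\end{pmatrix},
\]
applying $\pi_0\mathcal P$ to this column selects the $H$-component $l+p\,e^{-1}n$. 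This gives $D=l+p(I+k)^{-1}n$, and therefore $H_*(\underline{\widehat{CF}}(\Sigma,\alphas,\betas,\mathbf{z_2}))=H_*(H,\,l+p(I+k)^{-1}n)$, as claimed.

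I do not expect a genuine obstacle; the content is bookkeeping, and the two points deserving care are, first, that the geometric series inverting $\mathrm{id}-h_0\mathcal P$ terminates — this is exactly where invertibility of $e$ (through $h_0=e^{-1}$) and the nilpotence $(h_0\mathcal P)^2=0$ are used — and second, the consistent use of characteristic $2$, so that all signs in the homotopy identities and in $D$ vanish. As a self-contained alternative that avoids quoting the perturbation lemma, I could instead conjugate $d_{\mathbf{z_2}}$ by an explicit change of basis into the block-diagonal form $\mathrm{diag}\big((A\xrightarrow{e}B),\,(H,D)\big)$; equivalently, exhibit $\widetilde H=\{(e^{-1}n\eta,0,\eta):\eta\in H\}$ as a $d_{\mathbf{z_2}}$-invariant subcomplex with induced differential $D=l+pe^{-1}n$ and complementary acyclic subcomplex supported on $A\oplus B$. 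Checking invariance of $\widetilde H$ uses only the relation from the $(2,1)$-block of $d_{\mathbf{z_2}}^2=0$, namely $ef=ge+np$, and reproduces the same formula.
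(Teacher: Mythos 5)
Your proof is correct and is essentially the paper's argument in different packaging: both cancel the contractible piece $A\oplus B$ carrying the isomorphism $I+k$ and land on the Schur-complement differential $l+p(I+k)^{-1}n$ on $H$, the paper via two explicit base changes (using $d_{\mathbf{z_2}}^2=0$ to identify the middle column of the conjugated matrix) and you via the homological perturbation lemma applied to the deformation retract $(\iota_0,\pi_0,h_0=e^{-1})$ with the square-zero perturbation operator $h_0\mathcal{P}$. The ``self-contained alternative'' you sketch at the end --- conjugating $d_{\mathbf{z_2}}$ into block form and reading off the induced differential from the relation $ef=ge+np$ --- is precisely the route the paper takes, so the two proofs agree in substance.
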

\begin{proof}
	The proof follows from two base changes. The first base change is given by
	\[
		\begin{split}
		\left(\begin{array}{ccc}
			I+k&(I+k)f(I+k)^{-1}&0\\0&I&0\\0&0&I
		\end{array}
		\right)
		\left(\begin{array}{ccc}
			f&h&m\\I+k&g&n\\p&q&l
		\end{array}
		\right)
		\left(\begin{array}{ccc}
			(I+k)^{-1}&f(I+k)^{-1}&0\\0&I&0\\0&0&I
		\end{array}
		\right)\\
		=\left(\begin{array}{ccc}
			0&*&m'\\I&*&n\\p(I+k)^{-1}&*&l
		\end{array}
		\right)
		=\left(\begin{array}{ccc}
			0&m'A&m'\\I&nA&n\\A&lA&l
		\end{array}
		\right),
		\end{split}
	\]
	where $A=p(I+k)^{-1}$ and the last equality follows from  $d_{\mathbf{z_2}}^2=0$. The second base change is
	\	\[\begin{split}
		\left(\begin{array}{ccc}
			I&0&0\\0&I&0\\0&A&I
		\end{array}
		\right)
		\left(\begin{array}{ccc}
			0&m'A&m'\\I&nA&n\\A&lA&l
		\end{array}
		\right)
		\left(\begin{array}{ccc}
			I&0&0\\0&I&0\\0&A&I
		\end{array}
		\right)=\left(\begin{array}{ccc}
			0&0&m'\\I&0&n\\0&0&l+An
		\end{array}
		\right).
	\end{split}
	\]
	
\end{proof}

In applications of Lemma~\ref{HF}, we choose an area assignment $\Acal$ for the  regions in $\Sigma\setminus\alphas\cup\betas$ such that $\Acal(P_i)=0$ for $i=1,2,3$. Moreover,  $\mathbf{z_1},\mathbf{z_2}$ and $\Acal$ are chosen so that the regions  not touched by  $\mathbf{z_1}$ have very small areas and the regions containing marked points from $\mathbf{z_1}\setminus\mathbf{z_2}$ have very large areas.  Under these assumptions, in each $\SpinC$ class $\spinc$, $\Acal$ descends to an energy filtration on $\underline{\widehat{CF}}(\Sigma,\alphas,\betas,\mathbf{z_2},\spinc)$ (see \cite{OS}). We may further assume that
\[A=\langle a_1,\dots,a_r\rangle,\ \  B=\langle b_1,\dots,b_r\rangle,\quad\quad\text{with}\ \ 
\mathcal{A}(a_1)<\mathcal{A}(a_2)<\cdots<\mathcal{A}(a_r),\] 
while the differential $d_{\mathbf{z_1}}$ is given by sending $a_i$ to $b_i$. With respect to the energy filtration, $\mathcal{A}(a_i)-\mathcal{A}(b_i)$ is then a small positive number and $k$ is a  lower triangular matrix with zeros on the diagonal. Therefore, $I+k$ is an invertible matrix with $(I+k)^{-1}=\sum_{i=0}^\infty k^i$. This allows us use Lemma~\ref{HF}. Of course, the use of Lemma~\ref{HF} is not restricted to the aforementioned situation.\\

In our search for the $\SpinC$ classes $\spinc$ with the property that $\underline{\widehat{HF}}(M,\spinc)\neq 0$, we may first restrict our attention to the $\SpinC$ classes which satisfy $\langle c_1(\spinc), H(P_1)\rangle=0$, since $P_1$ corresponds to $\partial^+N$ and is represented by an embedded surface of genus $1$. We may then enlarge the set $\mathbf{z_2}=\{z\}$ of punctures in the Heegaard diagram to a bigger set $\mathbf{z_1}$, so that $(\Sigma,\alphas,\betas,\mathbf{z_1})$ is nice, while the criteria discussed in the previous two paragraphs is satisfied. If the group \[H_*(\underline{\widehat{HF}}(\Sigma,\alphas,\betas,\mathbf{z_1},\spinc)\]
is trivial, it  follows that $\underline{\widehat{HF}}(M,\spinc)$ is also trivial. Trying different sets $\mathbf{z_1}$ of marked points allows us exclude many of the $\SpinC$ classes $\spinc$ from the the Thurston polytope, consisting of $\SpinC$ structures $\spinct$ with  $\underline{\widehat{HF}}(M,\spinct)\neq 0$. Among the remaining $\SpinC$ classes, we combine Lemma~\ref{HF}, computer assisted computations and the study of certain classes of Whitney disks (from next section) to show that $\underline{\widehat{HF}}(M,\spinc_i)\neq 0$ for $i=1,2$, where $\spinc_1$ and $\spinc_2$ are the classes of generators $x_1$ and $x_2$ of $\underline{\widehat{CF}}(\Hcal)$ with
\[\spinc_z^r(x_1)=(0,1,7)\quad\text{and}\quad \spinc_z^r(x_2)=(0,-1,-8),\]
respectively. As we will see in Section~\ref{Computation-1} and Section~\ref{Computation-2}, there are $8$ generators $x_1$ and $72$ generators $x_2$ of the above type.

\newpage
\section{Non-polygonal disks with holomorphic representatives}\label{sec:disks}
In this section, we study the moduli spaces associated with three classes of Whitney disks with non-polygonal domains, which will be encountered in Section~\ref{Computation-2}. First, let $D_{k,l,n}=D(\phi_{k,l,n})$ denote the genus zero domain of a Whitney disk $\phi_{k,l,n}$, with two boundary components having $2k$-edges and $2l$-edges, respectively. The edges on each boundary component consist of alternating arcs from distinct $\alpha$ and $\beta$ curves. For such a disk to have Maslov index $1$, it is necessary that all the $2(k+l)$ angles on the boundary are acute angles, except for precisely one of them. We further assume that the obtuse angle is on the boundary component with $2l$ edges, where $\alpha_1$ and $\beta_1$ meet at $x_n$ and enter the interior of $D_{k,l,n}$, and intersect each other at $x_{n-1},\ldots,x_1$ in $D_{k,l,n}^\circ$.  There is some extra freedom in choosing the domain $D_{k,l,n}$ (up to isotopy of the curves) which corresponds to the edges where $\alpha_1$ and $\beta_1$ exit $D_{k,l,n}$ and is dropped from the notation (see Figure \ref{disk-1} (left)).\\

\begin{figure}
	\begin{center}
		\def\svgwidth{0.9\textwidth}
		\fontsize{8}{10}\selectfont
		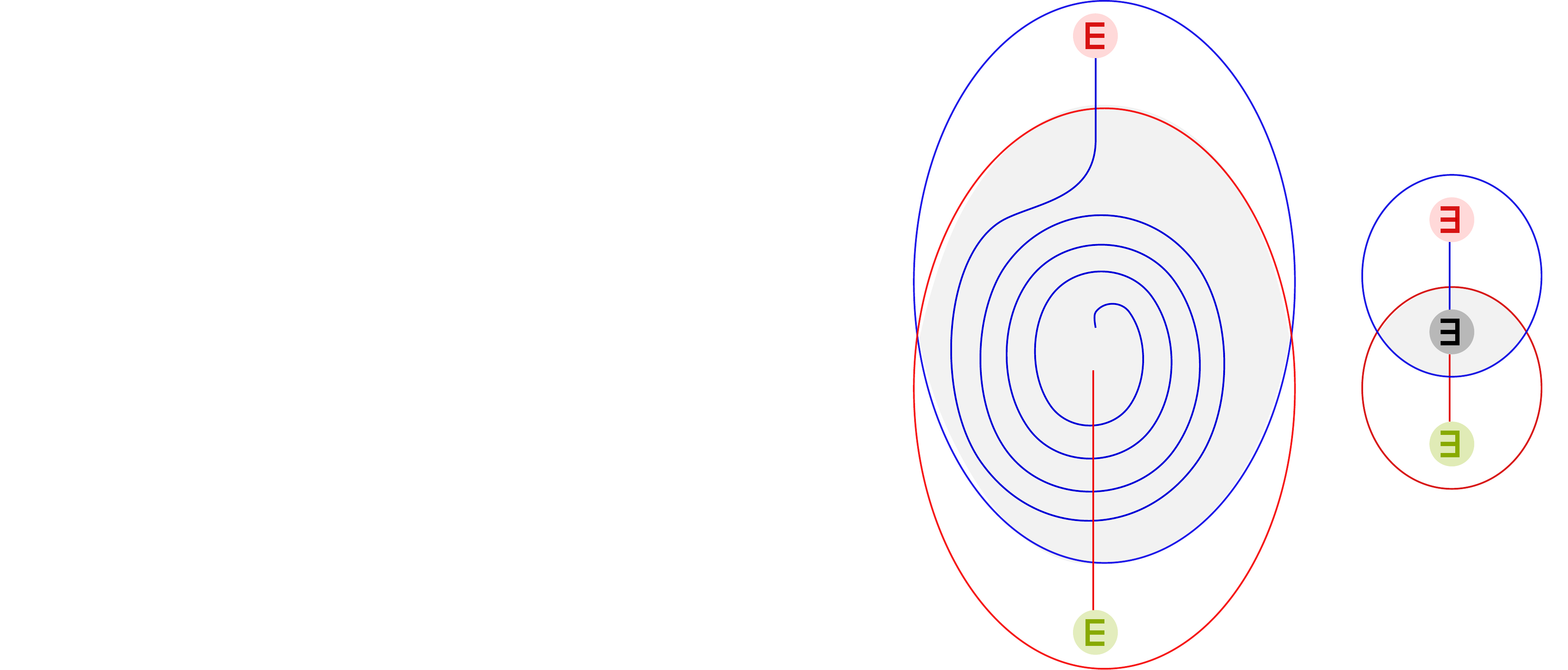	
		\caption{Part of a  Heegaard diagram which illustrates the domain associated with the disk $\phi_{k,1,n}$ is illustrated (left). The red curves are the $\alpha$ curves and the blue curves are the $\beta$ curves. A Heegaard diagram of genus $3$ containing the domain $D_{1,1,n}$ is illustrated on the right. The domain of the Whitney disk $\psi\in\pi_2(R_n,U_n)$ is shaded.} 
		\label{disk-1}
	\end{center}
\end{figure}

\begin{lemma}\label{Non-Poly-1}
	Let $\phi_{k,l,n}$ be a disk with a domain as described above. Then $\#\Mhat (\phi_{k,l,n})=1$.
\end{lemma}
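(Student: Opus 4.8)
The plan is to compute the count directly, by combining a Riemann mapping argument with a degeneration, after first reducing to a convenient local model. By construction $\mu(\phi_{k,l,n})=1$ (this is exactly the role of the hypothesis that all $2(k+l)$ boundary corners are acute except one), so $\widehat{\mathcal M}(\phi_{k,l,n})$ is a compact $0$-manifold and its mod-$2$ count is well defined. Since the count of index-$1$ holomorphic disks in a fixed homotopy class is a local invariant of the domain, independent of the ambient diagram whenever the domain embeds, I would first transport the problem into the genus-$3$ model diagram of Figure~\ref{disk-1} (right), in which $D_{k,l,n}$ sits embedded and no other regions interfere; in particular one arranges that the basepoints indicated in the figure have multiplicity zero, so that every representative is supported inside $D_{k,l,n}$.

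Second, I would produce a representative and prove local uniqueness by a Riemann mapping argument applied to the unfolding of $D_{k,l,n}$. The domain is an immersed disk whose only failure of embeddedness occurs along the arcs of $\alpha_1$ and $\beta_1$ that leave the obtuse corner $x_n$ and cross transversally at the interior points $x_{n-1},\dots,x_1$; cutting $D_{k,l,n}$ open along these two arcs resolves every interior crossing and yields an embedded region with a single concave corner (at $x_n$) and otherwise convex corners. A holomorphic representative of $\phi_{k,l,n}$ is then the same datum as a holomorphic map onto this cut-open region whose boundary values agree across the two copies of each cut arc. By the Riemann mapping theorem (in Schwarz--Christoffel form) the cut-open region admits, for each conformal structure on the source disk, a unique uniformizing map with rigidly determined prevertices, and the matching conditions along the $n$ cuts impose exactly enough constraints to rigidify the solution set, modulo the residual $\mathbb{R}$ of boundary reparametrizations, to a single orbit.

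To make this uniform in $(k,l,n)$ and to pin the count to $1$ rather than merely an odd number, I would run an induction on $n$ by a degeneration argument: shrinking to zero the area of a bigon bounded by arcs of $\alpha_1$ and $\beta_1$ near an innermost crossing pinches off a standard bigon, whose count is $1$, and identifies the rest of the configuration with a representative of $\phi_{k,l,n-1}$, so that the gluing theorem gives $\#\widehat{\mathcal M}(\phi_{k,l,n})=\#\widehat{\mathcal M}(\phi_{k,l,n-1})$; the base case is the embedded generalized polygon with one concave corner, handled directly by the Riemann map. The main obstacle is precisely this inductive step: one must check transversality of these explicitly described moduli spaces, verify that the degeneration has a single gluing end (ruling out broken configurations that cross a basepoint or that bubble off a boundary disk), and confirm that the matching conditions at the interior crossings admit no spurious second solution. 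Carrying out this bookkeeping, rather than the Riemann mapping input itself, which is standard, is where the real work lies.
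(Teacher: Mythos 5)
Your proposal takes a genuinely different route from the paper, but it has two substantive gaps. First, $D_{k,l,n}$ is not an immersed disk: by construction it is an embedded \emph{annulus} (genus zero with two boundary components), with the arcs of $\alpha_1$ and $\beta_1$ entering its interior at the unique obtuse corner. Schwarz--Christoffel uniformization applies to simply connected polygonal regions; for an annular domain the representative is found by introducing a cut of variable length emanating from the obtuse corner and matching a conformal modulus, and it is exactly this matching problem that determines the count. This is not a formality one can wave at: for annular index-one domains with all corners acute the count genuinely depends on the conformal structure of the diagram and can be $0$ or $1$ (this is the content of the classical annular examples in Ozsv\'ath--Szab\'o's original paper), so your sentence asserting that ``the matching conditions impose exactly enough constraints to rigidify the solution set to a single orbit'' is precisely the theorem to be proved, and the sketch never isolates the feature --- the single obtuse corner supplying the one free cut parameter --- that makes it true. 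Second, the inductive step fails as stated: pinching a bigon between $\alpha_1$ and $\beta_1$ is a deformation of the diagram (equivalently, of the conformal structure on $\Sigma$), and individual index-one counts are \emph{not} invariant under such deformations --- only the filtered chain homotopy type is. The gluing theorem identifies ends of index-two moduli spaces with broken index-one configurations \emph{within a fixed diagram}; it does not give $\#\Mhat(\phi_{k,l,n})=\#\Mhat(\phi_{k,l,n-1})$ across a diagram change. (Moreover, cancelling an innermost bigon removes interior intersection points of $\alpha_1\cap\beta_1$ in pairs, so it would relate $n$ to $n-2$, not $n-1$.)

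For contrast, the paper avoids all direct analysis of these moduli spaces. It plants the domain inside small auxiliary Heegaard diagrams whose total Floer homology is known in advance, enumerates every generator and every positive index-one class there, and solves for the unknown counts using $\partial^2=0$ together with the known homology; the induction on $n$, and the promotion from $\phi_{1,1,n}$ to general $\phi_{k,l,n}$, are both carried out by exhibiting index-two classes that degenerate in exactly two ways, so that the two products of counts must agree. You set up the same model diagram in your first step but then discard the global information that makes it useful. To salvage your approach you would need to carry out the annulus-with-cut analysis in full (including the monotonicity argument that the matching condition has exactly one solution); otherwise the algebraic route is the one that closes the argument.
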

\begin{proof}
	First, consider the case $k=l=1$. Consider the triply punctured Heegaard diagram
	\[\mathcal{H}_1=\mathcal{H}_1^n=(\Sigma_1,\boldsymbol{\alpha_1}=\{\alpha_1,\alpha_2,\alpha_3\},\boldsymbol{\beta_1}=\{\beta_1,\beta_2,\beta_3\},z_1,z_2,z_3)\] 
illustrated in Figure \ref{disk-1} (right). Here $\Sigma_1$ is a surface of genus three and the sutured manifold determined by $\mathcal{H}_1$ is the same as the sutured manifold determined by a Heegaard diagram $(T=S^1\times S^1,\alpha,\beta=\{b\}\times S^1,z_1,z_2)$, where $\alpha$ is homotopically trivial and cuts $\beta$ twice, one of the punctures is located in one of the two bigons in $T-\alpha-\beta$, and two of the punctures are located in the cylindrical component of $T-\alpha-\beta$. Therefore, the Heegaard Floer group associated with $\mathcal{H}_1$ is trivial. With the notation of Figure~\ref{disk-1}, the generators in  $\mathbb{T}_{\alphas}\cap\mathbb{T}_{\betas}$ are 
	\begin{align*}
	&R_i=(x_i,\ r,\ r'), &&S_i=(x_i,\ r,\ s'),&&T_i=(x_i,\ s,\ r'), & & U_i=(x_i,\ s,\ s'), && i=1,\dots,n+1\\
	&V=(t,\ t',\ r'), && W=(t,\ t',\ s'), &&X=(u,\ r,\ u'), && Y=(u,\ s,\ u'). &&
	\end{align*}
Most Whitney disks with positive domain and index $1$ which contribute to the differential are of the form $\phi_{1,1,k}$ for some $k=1,\ldots,n$. In fact, there are Whitney disks
\begin{align*}
\psi_k^1\in\pi_2(U_{k},S_{k+1}),\quad\psi_k^2\in\pi_2(T_{k},R_{k+1}),\quad \psi_{n+1-k}^3\in\pi_2(R_{k+1},S_k)\quad\text{and}\quad \psi_{n+1-k}^4\in\pi_2(T_{k+1},U_k)
\end{align*}
for $k=1,\ldots,n$, where each $\psi_k^i$ is of type $\phi_{1,1,k}$. Other than these classes, there are also disks 
\[\psi^1_0\in\pi_2(V,R_1),\quad \psi^2_0\in\pi_2(W,S_1),\quad\psi^3_0\in\pi_2(X,S_{n+1})\quad\text{and}\quad \psi^4_0\in\pi_2(Y,U_{n+1}),\]		
with Maslov index one, and the domain of every one of them is a rectangle. Therefore, $\#\Mhat(\psi_0^i)=1$ for $i=1,\ldots,4$. 
Moreover, there are disks $\phi\in\pi_2(T_1,W)$ and $\phi'\in\pi_2(T_{n+1},X)$ with domains of type $\phi_{1,2,n}$. If we set 
\[m=\#\Mhat(\phi),\quad m'=\#\Mhat(\phi')\quad \text{and}\quad
m_k^i=\#\Mhat(\psi_k^i)\quad \text{for}\ \  i=1,\ldots,4\ \  \text{and}\ \  k=0,\ldots,n,\] 
it follows that $m_0^i=m_1^i=1$ and that the differential of the chain complex is given by
\begin{diagram}
T_k&\rTo{m_k^2}&R_{k+1}&&&T_1&\rTo{1}&R_2&&T_{n+1}&\rTo{m'}&X&&Y&&V\\
\dTo{m_{n+2-k}^4}&&\dTo_{m_{n+1-k}^3}&&&\dTo{m}&&\dTo_{m_n^3}&&\dTo{1}&&\dTo_{1}&&\dTo{1}&&\dTo{1}\\
U_{k-1}&\rTo_{m_{k-1}^1}&S_k&&&W&\rTo_{1}&S_1&&U_n&\rTo_{m_n^1}&S_{n+1}&&U_{n+1}&&R_1
\end{diagram}
for $k=2,\ldots, n$. Therefore, we conclude that $m=m_n^3$, $m'=m_n^1$ and 
\begin{equation}\label{eq:1}
m_k^2\cdot m_{n+1-k}^3=m_{n+2-k}^4\cdot m_{k-1}^1\quad\quad\text{for}\ \ k=2,\ldots,n.
\end{equation}
For $n=2$, (\ref{eq:1}) implies $m_2^2=m_2^4$. Moreover, since the homology is trivial,  $m_2^2=m_2^4=1$. This proves the claim for $\phi_{1,1,2}$. Having established the proof for $\phi_{1,1,j}$ with $j=1,\ldots,n-1$ (where $n>2$), Equation~(\ref{eq:1}) for $k=2$ implies that $m_n^4=1$, proving the claim for $\phi_{1,1,n}$.\\
	
	\begin{figure}
		\begin{center}
			\def\svgwidth{0.78\textwidth}
			\fontsize{8}{10}\selectfont
			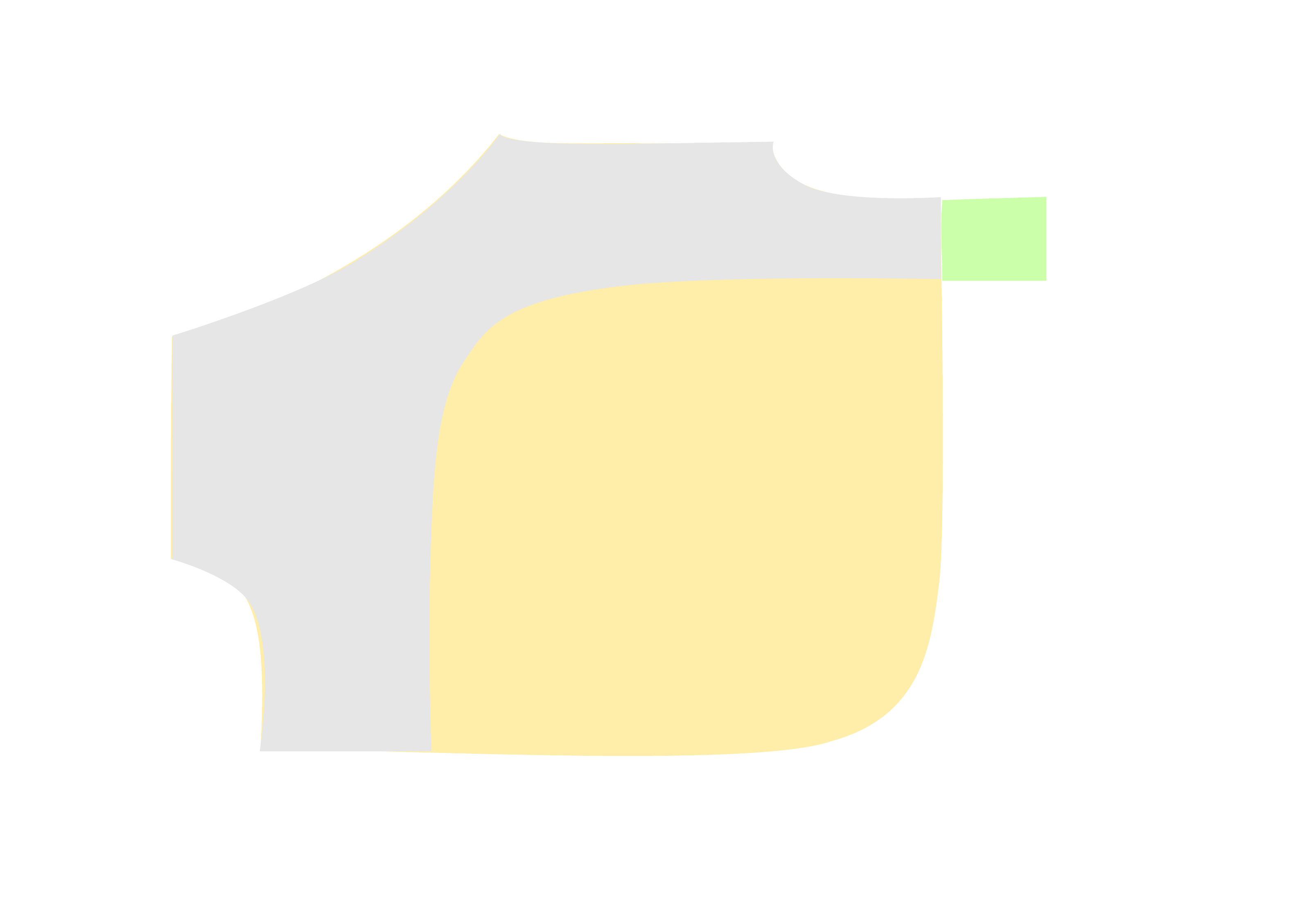	
			\caption{A  Heegaard surface of genus $k+3$.The colored region denotes the domain associated with  $\psi$, which degenerates in two ways as $\phi_{1,1,n}*\phi$ and $\phi'*\phi_{k,1,n}$, where $D(\phi')$ is the region colored green and  $D(\phi_{1,1,n})$ is the union of regions colored yellow.}
			\label{disk-3}
		\end{center}
	\end{figure} 

Next, we consider the case  $l=1$ while $k$ is arbitrary. Let
	\[\mathcal{H}_2=(\Sigma_2,\boldsymbol{\alpha_2}=\{\alpha_0,\alpha_1,\dots,\alpha_{k+1}\},\boldsymbol{\beta_2}=\{\beta_0,\beta_1,\dots,\beta_{k+1}\},z)\] 
	be the Heegaard diagram shown in Figure \ref{disk-3}. Here $\Sigma_2$ is a surface of genus $k+3$.  With the notation of Figure~\ref{disk-3} in place and refreshing the notation set for the case $k=l=1$, the  generators in $\mathbb{T}_{\alphas}\cap\mathbb{T}_{\betas}$ are 
\begin{align*}
&W=(v,r,u_2,\dots,u_k,r'),
&&V=(s,t_k,u_2,\dots,u_{t-1},s',t_t,\dots,t_{k-1},r'),
&&R_i=(u'_1,r,u_2,\dots,u_k,x_i),\\ 
&S_i=(t'_1,r,u_2,\dots,u_k,x_i),
&&T_i=(s,t_k,t_1,\dots,t_{k-1},x_i)\quad\quad\text{and}
&&U_i=(s,u_1,u_2,\dots,u_k,x_i),
\end{align*}
where $i$ belongs to $\{1,\dots,n+1\}$. Consider the Whitney disks
\begin{align*}
&\phi_{1,1,n}\in\pi_2(R_n,S_{n+1}), 
&&\phi\in\pi_2(S_{n+1},T_{n+1}),
&&\phi'\in\pi_2(R_n,U_n), 
&&\phi_{k,1,n}\in\pi_2(U_n,T_{n+1}).
\end{align*}
such that $D(\phi')$ is the green domain, $D(\phi_{1,1,n})$ is the union of yellow domains, $D(\phi)$ is the union of grey and green domains and $D(\phi_{k,1,n})$ is the union of grey and yellow domains. Then $\psi=\phi_{1,1,n}*\phi=\phi'*\phi_{k,1,n}$ has index $2$, while these are the only degenerations of $\psi$ as a juxtaposition of two positive Whitney disks of Maslov index $1$.  This implies 
\begin{align*}
\#\Mhat (\phi_{k,1,n})=\#\Mhat (\phi_{k,1,n})\cdot \#\Mhat (\phi')=
\#\Mhat (\phi_{1,1,n})\cdot \#\Mhat (\phi)=\#\Mhat (\phi_{1,1,n})=1,
\end{align*}
completing the proof for the case where $l=1$, while $k$ and $n$ are arbitrary. Similarly, the argument above may be used to conclude  $\#\Mhat (\phi_{k,l,n})=1$ for arbitrary values of $k,l$ and $n$. 		
\end{proof}

\begin{figure}
\begin{center}
	\def\svgwidth{0.9\textwidth}
	\fontsize{8}{10}\selectfont
	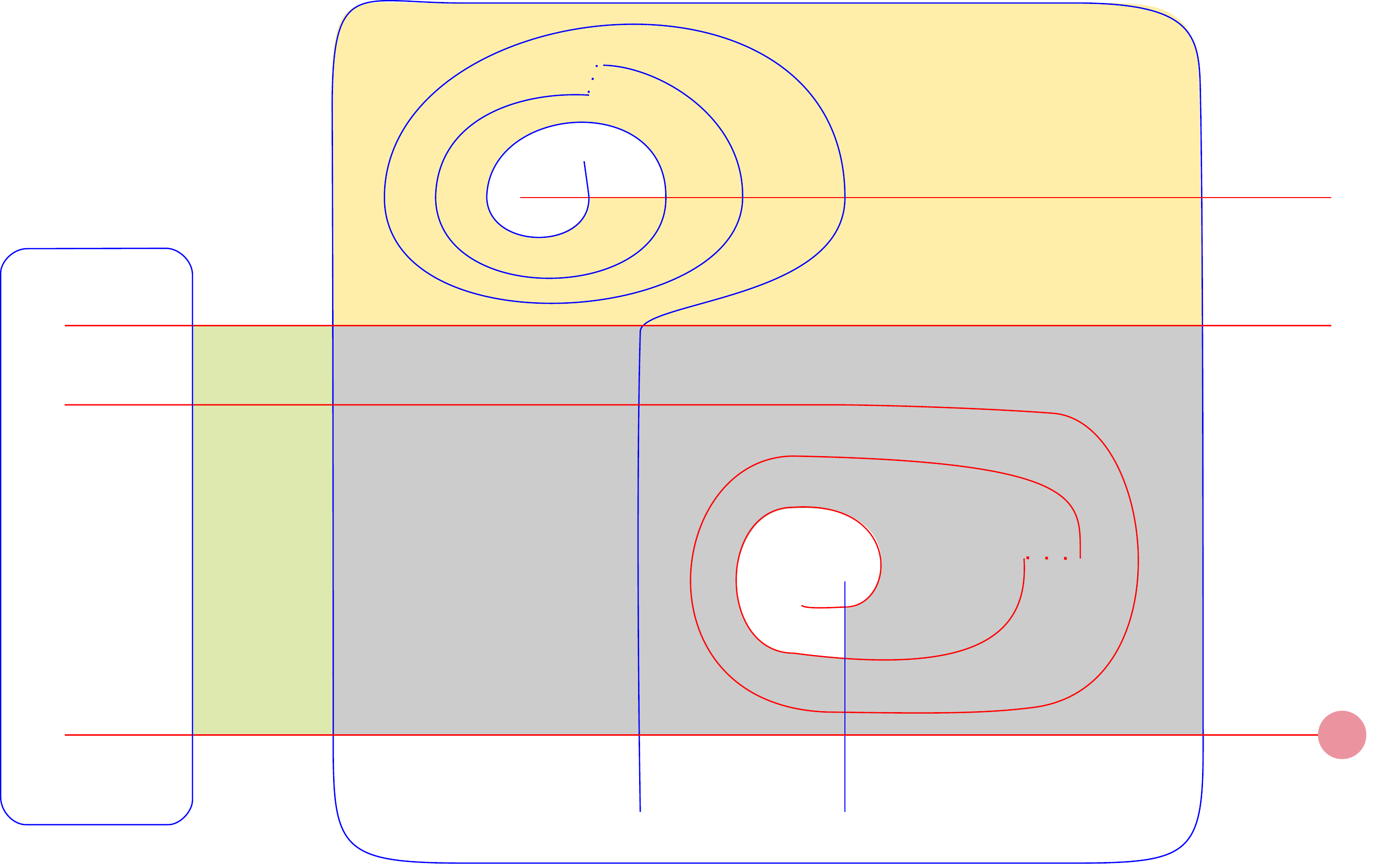	
	\caption{A  Heegaard surface of genus $6$.  The colored regions denote the domain associated with a Whitney disk $\psi$ of index $2$ which degenerates in two ways.}
	\label{disk-4}
\end{center}
\end{figure}

Let $D(\phi_{n,m})$ denote the genus zero domain of a Whitney disk $\phi_{n,m}$ which has three boundary components, each consisting of $2$ edges  on  $\alpha_i$ and $\beta_i$ for $i=1,2,3$. Let $\alpha_3$ have $n$ intersection points $\{x_1,\dots,x_n\}$ with $\beta_3$  and $\alpha_2$ have $m$ intersection points $\{y_1,\dots,y_m\}$ with $\beta_2$ in $D(\phi_{n,m})$. The union of the yellow regions and the grey regions in Figure~\ref{disk-4} illustrates the domain of such a disk. We assume that all the corners of the boundary edges in $D(\phi_{n,m})$ are acute except for two, where $\alpha_2$ intersects $\beta_2$ in an obtuse angle in $y_{m-1}$ and $\alpha_3$ intersects $\beta_3$ in an obtuse angle in $x_{n-1}$ (see Figure~\ref{disk-4}).

\begin{lemma}\label{Non-Poly-2}
	If the domain of $\phi_{n,m}$ is as described above, we have $\#\Mhat (\phi_{n,m})=1$.
\end{lemma}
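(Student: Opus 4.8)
The plan is to reduce Lemma~\ref{Non-Poly-2} to Lemma~\ref{Non-Poly-1} by the same broken-flow bootstrap used there, now applied to the pair-of-pants domain $D(\phi_{n,m})$ embedded in the genus-$6$ Heegaard diagram of Figure~\ref{disk-4}. First I would read off the generators of $\mathbb{T}_{\alphas}\cap\mathbb{T}_{\betas}$ from the figure (the points $x_i$, $y_j$, $u_i$, $t_i$, $r_i$, $s$) and identify the index-$2$ Whitney disk $\psi$ whose domain is the union of the yellow and grey regions. The point of the construction is that $\psi$ should admit a factorization $\psi=\phi_{n,m}*\rho$, where $\rho$ has the grey domain and is a rectangle (hence $\#\Mhat(\rho)=1$), together with a second factorization $\psi=\phi_1*\phi_2$ in which each $\phi_i$ is either a simple polygon or a disk of type $\phi_{k,l,n'}$ whose count is already known to equal $1$ by Lemma~\ref{Non-Poly-1}. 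Topologically this is natural: cutting the pair of pants along the arc responsible for $\rho$ merges two of its three boundary circles, leaving a two-boundary-component domain of exactly the type treated in Lemma~\ref{Non-Poly-1}.

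Before these factorizations are useful I would verify two things. The first is the index computation: using the Lipshitz formula $\mu(\phi)=e(D(\phi))+n_{\mathbf{x}}(\phi)+n_{\mathbf{y}}(\phi)$ for the cornered domains, I would check $\mu(\psi)=2$ and $\mu(\phi_{n,m})=\mu(\rho)=\mu(\phi_1)=\mu(\phi_2)=1$, so that $\Mhat(\psi)$ is a $1$-manifold whose ends are exactly the broken flows through the intermediate generators. The second, and this is the crux, is the assertion that the two listed factorizations exhaust the ends of $\Mhat(\psi)$: I would enumerate every way of writing $D(\psi)$ as a sum $D_1+D_2$ of two positive domains each carrying index $1$ with matching corners, and show that each such splitting is one of the two above. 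This is a finite but delicate combinatorial check that uses positivity of the local multiplicities in the two arms (the $x$-chain and the $y$-chain), together with the placement of the obtuse corners at $x_{n-1}$ and $y_{m-1}$, to forbid the spurious cuts.

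Granting this, the vanishing of the $\Z/2\Z$-count of boundary points of the compact $1$-manifold $\Mhat(\psi)$ yields the identity
\[
\#\Mhat(\phi_{n,m})\cdot\#\Mhat(\rho)=\#\Mhat(\phi_1)\cdot\#\Mhat(\phi_2).
\]
Since the three factors other than $\#\Mhat(\phi_{n,m})$ all equal $1$ — those arising from embedded rectangles directly, and any disk of type $\phi_{k,l,n'}$ by Lemma~\ref{Non-Poly-1} — this forces $\#\Mhat(\phi_{n,m})=1$, as claimed. If a single diagram does not directly present the companion disk in a form covered by Lemma~\ref{Non-Poly-1}, I would instead induct on $m$, peeling off one $y$-intersection per step through an auxiliary index-$2$ disk, with base case $m=1$ reducing to a two-boundary-component domain handled by Lemma~\ref{Non-Poly-1}; the inductive step is the identical broken-flow identity with $\phi_{n,m-1}$ in place of one factor.

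The step I expect to be the main obstacle is the exhaustiveness claim of the second paragraph: confirming that $\psi$ has no positive index-$1$ decompositions beyond the two intended ones. Unlike the $k=l=1$ base case of Lemma~\ref{Non-Poly-1}, where triviality of the model Heegaard Floer group pins down every unknown count, here I cannot appeal to a vanishing homology group and must argue directly from the geometry of Figure~\ref{disk-4} that any competing broken flow would force a negative local multiplicity somewhere in the two arms. This positivity-and-corner bookkeeping, rather than any single hard analytic input, is where the real work lies.
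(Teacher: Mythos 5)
Your proposal follows essentially the same route as the paper: the paper takes the index-$2$ class $\psi$ whose domain is the union of \emph{all} the colored regions (yellow, grey and green -- not just yellow and grey as you write), factors it as $\phi*\phi_{n,m}$ with $D(\phi)$ the green region and as $\phi_{1,1,m-1}*\phi_{2,1,n-1}$, asserts these are the only positive index-$1$ degenerations, and concludes $\#\Mhat(\phi_{n,m})=\#\Mhat(\phi_{1,1,m-1})\cdot\#\Mhat(\phi_{2,1,n-1})=1$ by Lemma~\ref{Non-Poly-1}. Your mislabeling of the auxiliary domains is immaterial, and the exhaustiveness check you rightly identify as the crux is exactly the step the paper states without further elaboration.
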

\begin{proof}
Consider the Heegaard diagram
	\[\mathcal{H}_3=(\Sigma,\alphas=\{\alpha_1,\alpha_2,\alpha_3,\alpha_{4}\},\betas=\{\beta_1,\beta_2,\beta_3,\beta_4\},z)\] 
	which is illustrated in Figure \ref{disk-4}. Here $\Sigma$ is a surface of genus six which is obtained by attaching six one-handles that each one connects the boundary circles of disks with the same color.  There are $4nm+3m+2n+2$ intersection points in $\mathbb{T}_{\alphas}\cap\mathbb{T}_{\betas}$. With the notation of Figure~\ref{disk-4} in place, these intersection points are
\begin{align*}
&P=(t_4,u_1,r_3,s),
&&Q=(t_4,u_3,r_1,s)
&&R_i=(t_1,u_3,x_i,s), 
&&S_i=(t_3,u_1,x_i,s),\\ 
&T_{i,j}=(t_2,u_1,x_i,y_j),
&& U_{i,j}=(t_5,u_1,x_i,y_j),
&& V_{i,j}=(t_1,u_2,x_i,y_j),
&&W_{i,j}=(t_1,u_4,x_i,y_j)\\	
&X_j=(t_4,u_2,r_1,y_j),
&&Y_j=(t_4,u_4,r_1,y_j)
&&\quad\quad\text{and}
&&Z_j=(t_4,u_1,r_2,y_j),
\end{align*}
for $i=1,\ldots,n$ and $j=1,\ldots,m$. Consider the Whitney disks of index $1$ 
	\begin{align*}
&\phi_{1,1,m-1}\in\pi_2(V_{n-1,m-1},W_{n-1,m}), &&&& \phi_{2,1,n-1}\in\pi_2(W_{n-1,m},U_{n,m}),\\
&\phi\in\pi_2(V_{n-1,m-1},T_{n-1,m-1}) &&\text{and}&& \phi_{n,m}\in\pi_2(T_{n-1,m-1},U_{n,m})
	\end{align*}
Here, the domains $D(\phi_{1,1,m-1})$ is the union of the regions colored yellow, $D(\phi)$ is the union of the regions colored green,$D(\phi_{2,1,n-1})$ is the union of the regions colored grey and green,  and $D(\phi_{n,m})$ is the union of the regions colored yellow and grey in Figure~\ref{disk-4}. Then 
\[\psi=\phi_{1,1,m-1}*\phi_{2,1,n-1}=\phi*\phi_{n,m},\]
determined by $D(\psi)$ which is the union of all colored regions, is a Whitney disk of Maslov index $2$ in $\pi_2(V_{n-1,m-1},U_{n,m})$. The disk $\psi$ degenerates as the juxtaposition of two disks of Maslov index $1$ only in the above two ways. Therefore, we conclude that
\begin{align*}
\#\Mhat(\phi_{n,m})=\#\Mhat(\phi_{n,m})\cdot \#\Mhat(\phi)=
\#\Mhat(\phi_{1,1,m-1})\cdot \#\Mhat(\phi_{2,1,n-1})=1.
\end{align*}
The last equality, which follows from Lemma~\ref{Non-Poly-1}, completes the proof of the lemma.
\end{proof}

For  $\phi\in\pi_2(x,y)$, let $D(\phi)$ be a surface of genus one with one boundary component consisting of $2$ edges that contains a unique intersection point $u$ in the interior which belongs to both $x$ and $y$ (see Figure \ref{72-3}). The grey domains on the left illustrate $D(\phi)$. 

\begin{lemma}\label{Non-Poly-3}
	Let $\phi$ be a disk with a domain as described above. Then $\#\Mhat (\phi)=1$.
\end{lemma}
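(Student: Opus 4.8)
The plan is to follow the template of Lemmas~\ref{Non-Poly-1} and~\ref{Non-Poly-2}. Since the domain $D(\phi)$ is a once-punctured torus (of Euler measure $-\tfrac{3}{2}$) rather than a planar polygon, the combinatorial rectangle/bigon count available for a nice diagram does not apply, and $\#\Mhat(\phi)$ must instead be extracted from the compactness of a one-parameter family of holomorphic disks. Concretely, I would embed $D(\phi)$ as the grey sub-domain of a suitable auxiliary Heegaard diagram $\mathcal{H}_4$, of the genus indicated in Figure~\ref{72-3}, chosen so that $D(\phi)$ is realized with its single interior intersection point $u$ appearing as a genuine coordinate shared by both endpoints $x$ and $y$, and so that there is room adjacent to $D(\phi)$ to attach elementary (rectangular and bigon) regions.

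The mechanism I would use is the standard one behind $\partial^2=0$: produce a Whitney disk $\psi$ of Maslov index $2$ whose domain is $D(\phi)$ enlarged by these elementary regions, and argue that the one-dimensional moduli space $\Mhat(\psi)$ has a compactification $\overline{\Mhat}(\psi)$ which is a compact $1$-manifold. Its boundary points are broken flows, and I would show that they arise from exactly two factorizations,
\[\psi=\phi*\phi_1=\phi_2*\phi_3,\]
where $\phi_1$ is an elementary polygon and $\phi_2,\phi_3$ are either elementary polygons or disks already controlled by Lemma~\ref{Non-Poly-1}. Here the genus-one topology of $D(\psi)$ is distributed among the factors according to how they are glued along their corners, which is exactly what permits the second factorization to consist of genus-zero pieces. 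Counting the ends modulo $2$ then gives
\[\#\Mhat(\phi)\cdot\#\Mhat(\phi_1)=\#\Mhat(\phi_2)\cdot\#\Mhat(\phi_3),\]
and since every factor on the right-hand side, as well as $\#\Mhat(\phi_1)$, equals $1$, we conclude $\#\Mhat(\phi)=1$. If a base value cannot be read off directly, I would instead arrange $\mathcal{H}_4$ to have trivial (sutured) Floer homology, exactly as in the $k=l=1$ case of Lemma~\ref{Non-Poly-1}, so that the vanishing of the homology pins down the remaining count.

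The hard part will be controlling the ends of $\Mhat(\psi)$ in the presence of the genus-one handle and the interior point $u$. First, the index bookkeeping must be redone by hand: the Euler measure $-\tfrac{3}{2}$ of $D(\phi)$ is compensated by the point measures at the corners and at $u$, and one must verify that $\psi$ genuinely has index $2$. Second, and more seriously, I must rule out degenerations peculiar to this topology --- collapses of the genus-one handle, $\alpha$- and $\beta$-boundary degenerations, and constant or spinning bubbles --- so that only the two stated breakings survive. The boundary degenerations are excluded by weak admissibility together with $n_z(\psi)=0$, and the remaining strata must be shown to be empty or of the wrong expected dimension. The most delicate point is the behaviour of the interior intersection point $u$ under gluing: I must check that $u$ persists as a common coordinate through each factorization and does not generate an extra stratum in $\overline{\Mhat}(\psi)$, since it is precisely this interior double point that distinguishes the present domain from the planar cases already treated.
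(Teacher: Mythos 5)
Your primary mechanism is not the one the paper uses, and as stated it has a gap. You propose to realize $D(\phi)$ inside an index-$2$ class $\psi$ admitting exactly two factorizations, the second of which consists of genus-zero pieces ``already controlled by Lemma~\ref{Non-Poly-1}.'' But the pieces controlled by Lemmas~\ref{Non-Poly-1} and~\ref{Non-Poly-2} are all planar domains with two or three boundary components, whereas $D(\phi)$ is a multiplicity-one genus-one domain with a single two-edged boundary and an interior coincidence point $u$ fixed by both endpoints; no candidate $\psi$ with an alternative all-genus-zero factorization is exhibited, and it is far from clear that one exists in any small model. The sentence asserting that the genus ``is distributed among the factors according to how they are glued along their corners'' is exactly the point that would need proof, and you yourself flag the behaviour of $u$ under such a splitting as unresolved. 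So the argument you lead with cannot be accepted as written.

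However, the contingency you mention at the end is essentially the paper's actual proof, except that the model homology is nontrivial rather than zero. The paper builds a triply punctured genus-one diagram $\mathcal{H}_4$ with two $\alpha$- and two $\beta$-curves and exactly six generators $R_1=(x,u)$, $R_2=(y,u)$, $S_1,S_2,T_1,T_2$, in which $D(\phi)$ appears verbatim (with $u$ a shared coordinate of $R_1$ and $R_2$). Every differential in this model other than the one given by $\phi$ is a polygon count with a unique holomorphic representative, so the full differential is known up to the single unknown $\#\Mhat(\phi)$. An isotopy removing the two intersection points of $\beta_2$ with $\alpha_1$ followed by a destabilization identifies $\mathcal{H}_4$ with the standard multi-pointed diagram for $S^1\times S^2$, whose homology is $\Z^2$; the only value of $\#\Mhat(\phi)$ consistent with this answer is $1$. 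If you replace your index-$2$ degeneration argument by this global model computation --- i.e., promote your fallback to the main argument and carry out the identification of the model manifold --- you recover the paper's proof. As it stands, the proposal identifies the right family of techniques but rests its weight on an unestablished factorization.
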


\begin{figure}
	\begin{center}
		\def\svgwidth{13cm}
		\fontsize{10}{10}\selectfont
		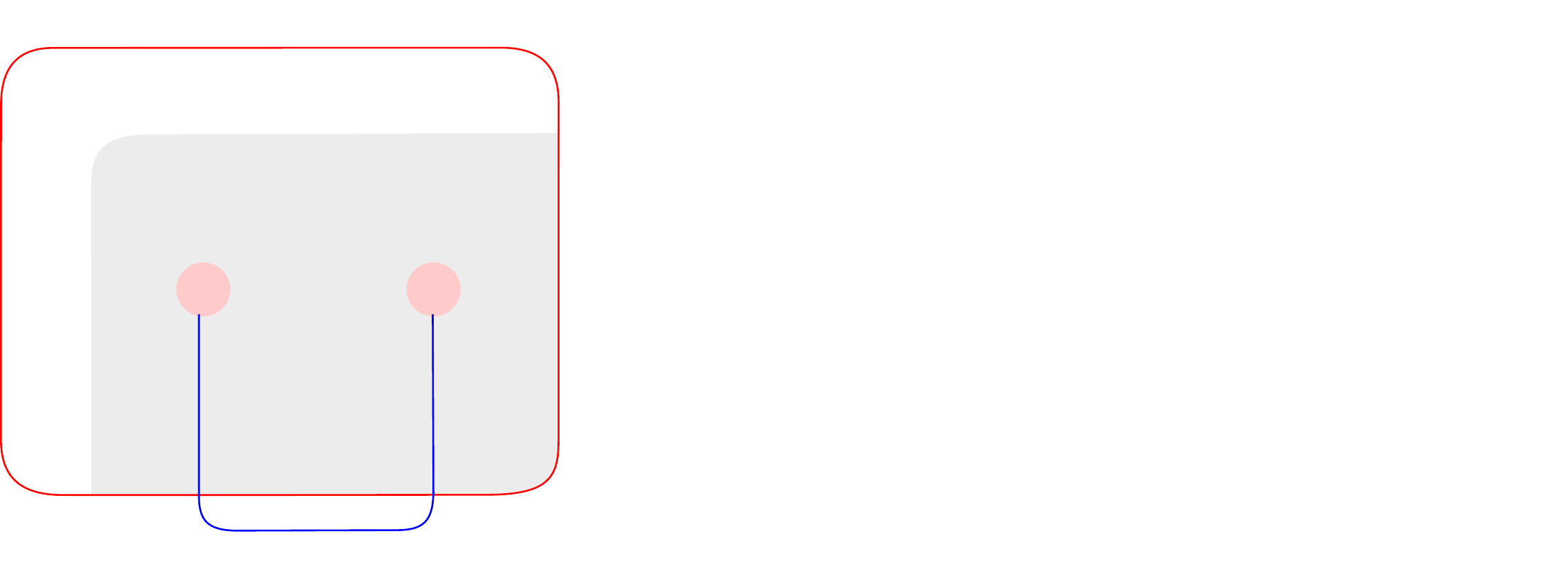	
		\caption{Left: A Heegaard diagram with three marked points and a Heegaard surface of genus one. Right: the differential associated with this diagram. }
		\label{72-3}
	\end{center}
\end{figure}

\begin{proof}
	Consider the triply punctured Heegaard diagram
	\[\mathcal{H}_4=(\Sigma,\alphas=\{\alpha_1,\alpha_2\},\betas=\{\beta_1,\beta_2\},\mathbf{z}=\{z_1,z_2,z_3\})\] 
of genus $1$ which is illustrated in Figure \ref{72-3} (left). With the notation of Figure~\ref{72-3} in place, there are $6$ intersection points in $\mathbb{T}_{\alphas}\cap\mathbb{T}_{\betas}$, which may be listed as
\begin{align*}
&R_1=(x,u),&&R_2=(y,u),&&S_1=(t,v),&&S_2=(t,w),&&T_1=(s,v)&&\text{and}&&T_2=(s,w).
\end{align*}
	The differential is shown in Figure \ref{72-3}, on the right. A black arrow which connects a generator $X$ to a generator $Y$ denotes that there is a disk from $X$ to $Y$ with a unique holomorphic representative. The arrow in purple denotes the disk with the domain $D(\phi)$. By doing an isotopy which removes the two intersection points of $\beta_2$ with $\alpha_1$ and then doing a destabilization which removes $\alpha_2$ and $\beta_2$, we obtain the standard genus zero Heegaard diagram for the closed three manifold $S^1\times S^2$.  Therefore the Heegaard Floer homology group associated with $\mathcal{H}$ is $\Z^2$. This proves that $\#\Mhat (\phi)=1$.
\end{proof}

\newpage
\section{The first non-trivial Heegaard-Floer group}\label{Computation-1}
 Let us assume that $\spinc_1$ corresponds to the triple $(0,1,7)$. The chain complex $\underline{\widehat{CF}}(M,\spinc_1)$ is then generated by the following $8$ generators:  
\begin{align*}
&\circled{1}=\big
\{x_{1,1,2}, x_{2,2,5}, x_{3,3,1}, x_{{4},5,2}, x_{{5},4,2}\big\}         
&&\circled{2}=\big
\{x_{1,1,2}, x_{2,2,5}, x_{3,3,2}, x_{{4},5,2}, x_{{5},4,2}\big\}\\
&\circled{3}=\big
\{x_{1,1,2}, x_{2,4,2}, x_{3,2,1}, x_{{4},5,2}, x_{{5},3,2}\big\}          
&&\circled{4}=\big
\{x_{1,1,3}, x_{2,4,2}, x_{3,2,2}, x_{{4},5,2}, x_{{5},3,2}\big\}\\
&\circled{5}=\big
\{x_{1,2,1}, x_{2,1,2}, x_{3,5,1}, x_{{4},3,1}, x_{{5},4,2}\big\}          
&&\circled{6}=\big
\{x_{1,2,1}, x_{2,4,2}, x_{3,5,1}, x_{{4},1,2}, x_{{5},3,2}\big\}\\
&\circled{7}=\big
\{x_{1,3,1}, x_{2,1,2}, x_{3,2,1}, x_{{4},5,2}, x_{{5},4,2}\big\}          
&&\circled{8}=\big
\{x_{1,4,2}, x_{2,1,2}, x_{3,2,2}, x_{{4},5,2}, x_{{5},3,2}\big\}
\end{align*}

\begin{figure}
	\begin{center}
		\def\svgwidth{0.75\textwidth}
		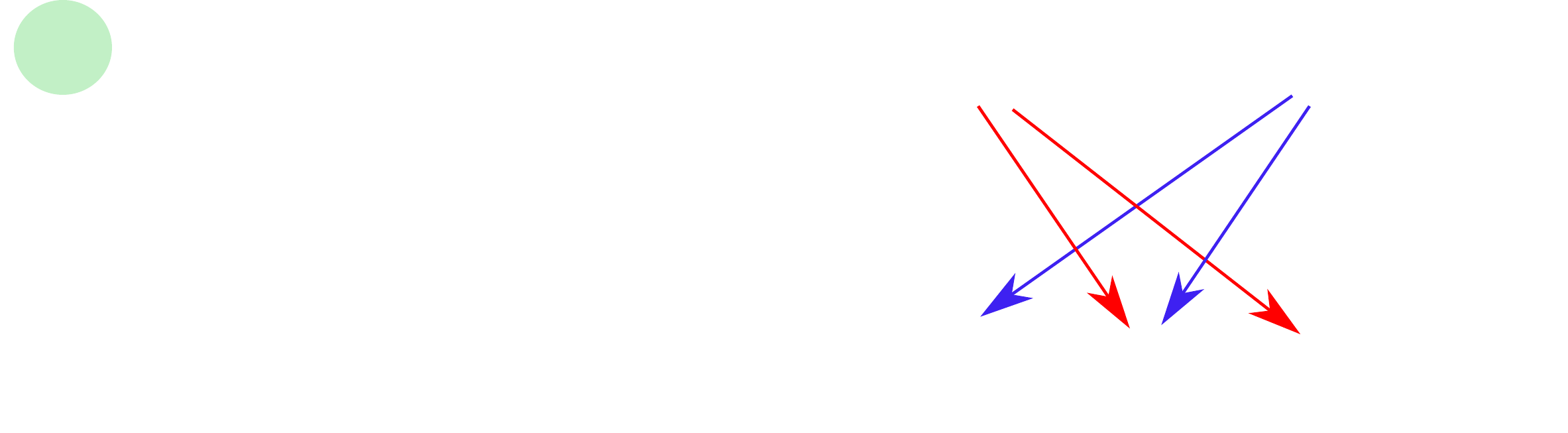	
		\caption{Left: The differential for $(\Sigma,\alphas,\betas,\mathbf{z_1})$, Right: The differential for  $(\Sigma,\alphas,\betas,z)$. The contributions from the disks $\phi_i$ for $i=1,\dots,5$, the disks $\phi_i-P_1$ for  $i=1,\dots,4$ and the disks $\phi_i+P_1$ for $i=1,2,5$ are denoted with green, red and blue arrows, respectively.}
\label{8-1}
\end{center}
\end{figure}

Let $\mathbf{z_1}$ consist of marked points in all domains except for  $D_{12}$, $D_{13}$, $D_{30}$, $D_{49}$. The differentials for the Heegaard diagram $(\Sigma,\alphas,\betas,\mathbf{z_1})$ along with the domains of the connecting disks are shown in Figure \ref{8-1} on the left. In this figure, a black arrow from a generator $x$ to a generator $y$ denotes that there is a disk from $x$ to $y$ with a unique holomorphic representative. In fact, the domains associated with all the disks are polygons. The group  $H_*(\widehat{CF}(\Sigma,\alphas,\betas,\mathbf{z_1}), \spinc_1)$ is thus isomorphic to $\Z_2^2$ and is generated by $C=\{\circled{5},\circled{7}\}$. To compute $\underline{\widehat{HF}}(M,\spinc_1)$, we  need to determine the matrices $l$, $n$, $p$, and $k$ in Lemma \ref{HF}. Define the disks  $\phi\in\pi_2\big(\circled{3},\circled{7}\big)$ and 
\[
\phi_1\in\pi_2\big(\circled{5},\circled{7}\big),\ \ 
\phi_2\in\pi_2\big(\circled{5},\circled{6}\big),\ \ 
\phi_3\in\pi_2\big(\circled{2},\circled{7}\big),\ \ 
\phi_4\in\pi_2\big(\circled{2},\circled{6}\big),\ \ 
\phi_5\in\pi_2\big(\circled{5},\circled{1}\big),
\]
of Maslov index $1$ by specifying their domains. If $I=\{4,16,18, 26,34,36,38,51,52\}$ and $D$ is the formal sum $\sum_{i\in I}D_i$, we have
\begin{align*}
	&D(\phi)=D_0,
	&&D(\phi_1)=\Sigma-\{D_{12}+D_{14}+D_{30}\},
	&&D(\phi_2)=\Sigma-\{D_{0}+D_{14}\},\\
	&D(\phi_3)=\Sigma-(D+D_{12}+D_{14}+D_{30}),
	&&D(\phi_4)=\Sigma-(D+D_0+D_{14}),
	&&D(\phi_5)=D+D_{49}.
\end{align*}

Then all the disks of index $1$ and positive domain between the generators of this complex  are  the disks $\psi_i$ for $i=1,2,3$, the disk $\phi$, the disks $\phi_i$ for $i=1,\dots,5$, the disks $\phi_i-P_1$ for  $i=1,\dots,4$ and the disks $\phi_i+P_1$ for $i=1,2,5$, see Figure \ref{8-1} (right). Let
\begin{align*}
	&b_i=\#\widehat{M}(\phi_i), &&\text{for}\ \ i=1,\dots,5,
	&&&&c_i=\#\widehat{M}(\phi_i-P_1),&&\text{for}\ \  i=1,\dots,4,\\
	&d_i=\#\widehat{M}(\phi_i+P_1),&&\text{for}\ \   i=1,2,
	&&\text{and}&&c_5=\#\widehat{M}(\phi_5+P_1).&&
\end{align*}

Setting $K=b_4+c_4e^{-P_1}$, $N_1=b_5+c_5e^{P_1}$,  $N_2=b_2+c_2e^{-P_1}+d_2e^{P_1}$, $P=b_3+c_3e^{-P_1}$ and $L=b_1+c_1e^{-P_1}+d_1e^{P_1}$, it then follows that 
\begin{align*}
k=\left(\begin{array}{ccc}
		0&0&0\\K&0&0\\0&0&0
	\end{array}
	\right),\quad
	n=\left(\begin{array}{cc}
		N_1&0\\N_2&0\\0&0
	\end{array}
	\right),\quad
	&p=\left(\begin{array}{ccc}
		0&0&0\\P&1&0
	\end{array}
	\right),\quad
	l=\left(\begin{array}{cc}
		0&0\\L&0
	\end{array}
	\right)\\
	\Rightarrow\quad &l+p(I+k)^{-1}n=\left(\begin{array}{cc}
		0&0\\L+N_1(P+K)+N_2&0
	\end{array}
	\right).
\end{align*}
Note that in this matrix we have
\begin{equation}
	\begin{split}\label{Eq-01}
		\star=L+N_1(P+K)+N_2=&(b_1+b_2+b_5b_3+b_5b_4+c_5c_3+c_5c_4)\\
		&+(c_1+c_2+b_5c_3+b_5c_4)e^{-P_1}+
		(d_1+d_2+c_5b_3+c_5b_4)e^{P_1}.
	\end{split}
\end{equation}
The computation of $\underline{\widehat{HF}}(M,\spinc_1)$ is thus reduced to a computation of $\star$. Consider the disks
\begin{align*}
&\lambda_1\in\pi_2\big(\circled{7},\circled{5}\big),
&&&&\lambda_2\in\pi_2\big(\circled{6},\circled{5}\big),
&&&&\lambda_3\in\pi_2\big(\circled{7},\circled{2}\big),\\
&\lambda_4\in\pi_2\big(\circled{6},\circled{2}\big),
&&&&\lambda_5\in\pi_2\big(\circled{1},\circled{5}\big),
&&\text{and}&&\lambda_6\in\pi_2\big(\circled{1},\circled{2}\big).
\end{align*}
which correspond to the domains
\begin{align*}
	&D(\lambda_i)=\Sigma-D(\phi_i),&&\text{for}\ \  i=1,\dots,5&&\text{and}&& D(\lambda_6)=\Sigma-D(\psi_1).
\end{align*}
The domains of $\lambda_1$ and $\lambda_2$ are polygons. Then all the positive disks of index $1$ in $\pi_2(\circled{1},x)$, $\pi_2(x,\circled{5})$, and $\pi_2(x,\circled{2})$, with $x$  a generator of $\underline{\widehat{CF}}(M,\spinc_1)$, which the region containing the marked point $z$ are $\lambda_i$ for $i=1,\dots,6$, $\lambda_i+P_1$ for $i=3,4,6$, and $\lambda_i-P_1$ for $i=5,6$. Let
\begin{align*}
&b'_i=\#\widehat{M}(\lambda_i),&&\text{for}\ \  i=3,\dots,6,
&&&&c'_i=\#\widehat{M}(\lambda_i+P_1),&&\text{for}\ \  i=3,4,\\
&c'_i=\#\widehat{M}(\lambda_i-P_1),&&\text{for}\ \   i=5,6,
&&\text{and}&&d'_6=\#\widehat{M}(\lambda_6+P_1).
\end{align*}

Consider the Whitney disk classes of index $2$ 
\[\eta_1,\eta'_1,\eta''_1\in\pi_2(\circled{5},\circled{5}),\quad\eta_2,\eta'_2,\eta''_2\in\pi_2(\circled{1},\circled{1})\quad\text{and}\eta_3,\eta'_3,\eta''_3\in\pi_2(\circled{2},\circled{2})\]  which correspond to the periodic domains 
\[D(\eta_i)=\Sigma,\quad D(\eta'_i)=\Sigma-P_1\quad\text{and}\quad D(\eta''_i)=\Sigma+P_1,\quad\quad\text{for}\ \ i=1,2,3.\] 
The possible degenerations of  $\eta_1,\eta_1$ and $\eta_1'$ to positive disks of Maslov index $1$ are:
\begin{align*}
&\eta_1=\phi_j*\lambda_j=(\phi_5+P_1)*(\lambda_5-P_1),&&\text{for}\ \ j=1,2,5,\\
&\eta_1'=(\phi_i-P_1)*\lambda_i=\phi_5*(\lambda_5-P_1),&&\text{for}\ \ i=1,2\quad\quad\text{and}\\
&\eta_1''=(\phi_i+P_1)*\lambda_i=(\phi_5+P_1)*\lambda_5,&&\text{for}\ \ i=1,2
\end{align*}
Therefore, the following three equations follow:
\begin{equation}\label{Eq-02}
		b_1+b_2+b'_5b_5+c'_5c_5=0,\quad\quad
		c_1+c_2+c'_5b_5=0\quad\quad\text{and}\quad\quad
		d_1+d_2+b'_5c_5=0.
\end{equation}

The possible degenerations of $\eta_2,\eta_2'$ and $\eta_2''$ into positive disks of Maslov index $1$ are 
\begin{align*}
&\eta_2=\lambda_6*\psi_1=\lambda_5*\phi_5=(\lambda_5-P_1)*(\phi_5+P_1)&&\\
&\eta'_2=(\lambda_6-P_1)*\psi_1=(\lambda_5-P_1)*\phi_5&&\text{and}\\
&\eta''_2=(\lambda_6+P_1)*\psi_1=\lambda_5*(\phi_5+P_1).&&
\end{align*}
Therefore, we obtain the following $3$ equations
\begin{equation}\label{Eq-03}
b'_6+b'_5b_5+c'_5c_5=0,\quad\quad
c'_6+c'_5b_5=0\quad\quad\text{and}\quad\quad
d'_6+b'_5c_5=0.
\end{equation}
Similarly, the possible degeneration of $\eta_3,\eta_3'$ and $\eta_3''$ into positive disks of Maslov index $1$ are
\begin{align*}
&\eta_3=\phi_i*\lambda_i=(\phi_i-P_1)*(\lambda_i+P_1)=\psi_1*\lambda_6&&\\
&\eta'_3=(\phi_i-P_1)*\lambda_i=\psi_1*(\lambda_6-P_1)&&\text{and}\\
&\eta''_3=\phi_i*(\lambda_i+P_1)=\psi_1*(\lambda_6+P_1)&&\text{for}\ \ i=3,4.
\end{align*}
 Therefore, we obtain the following $3$ equations as well
\begin{equation}\label{Eq-04}
b'_3b_3+b'_4b_4+c'_3c_3+c'_4c_4+b'_6=0,\quad
b'_3c_3+b'_4c_4+c'_6=0\quad\text{and}\quad
c'_3b_3+c'_4b_4+d'_6=0.
\end{equation}

 Let $\mathbf{z'_1}$ contain a marked point in all the regions of $\Sigma-\alphas-\betas$ except for those appearing in $D(\lambda_3)$, $D(\lambda_4)$, $D(\phi_5)$, and $D_{13}$ and $\partial_1$ denote the corresponding differential. Note that $P_1$, $P_2$ and $P_3-\Sigma$ may still be considered as a basis for the space of periodic domains. Therefore, the diagram remains admissible for this choice of marked points. Then
\begin{equation}\label{Eq-05}
\partial_1^2\circled{3}=(b'_3+b'_4)\circled{2}\quad\quad\text{and}\quad\quad \partial_1^2\circled{7}=(b_5+b'_3)\circled{1}\quad\quad\Rightarrow\quad\quad b'_3=b'_4=b_5.
\end{equation}

 Similarly, let $\mathbf{z'_2}$ contain a marked point in all the regions of $\Sigma-\alphas-\betas$ except for those appearing in $D(\lambda_3+P_1)$, $D(\lambda_4+P_1)$, $D(\phi_5+P_1)$, and $D_{13}$ and $\partial_2$ denote the corresponding differential. Then
\begin{equation}\label{Eq-06}
\partial_2^2\circled{3}=(c'_3+c'_4)\circled{2}\quad\quad\text{and}\quad\quad \partial_2^2\circled{7}=(c_5+c'_3)\circled{1}\quad\quad\Rightarrow\quad\quad c'_3=c'_4=c_5.
\end{equation}
If follows from Equations \ref{Eq-01}-\ref{Eq-06} that the matrix $l+p(I+k)^{-1}n=0$. Thus $\underline{\widehat{HF}}(M,\spinc_1)\neq0$.

\newpage
\section{The second non-trivial Heegaard-Floer group}\label{Computation-2}
Let $\spinc_2$ be the $\SpinC$ class which corresponds to  $(0,-1,-8)$. The chain complex $\underline{\widehat{CF}}(M,\spinc_2)$ is bigger, in comparison with $\underline{\widehat{CF}}(M,\spinc_1)$, and  is generated by the following $72$ generators:
{\scriptsize{
\begin{align*}
&\circled{1}=\{x_{1,1,2}, x_{2,2,4}, x_{3,4,1}, x_{4,5,1}, x_{5,3,2}\}          
&&\circled{2}=\{x_{1,1,3}, x_{2,2,5}, x_{3,4,1}, x_{4,5,1}, x_{5,3,2}\}
&&\circled{3}=\{x_{1,1,2}, x_{2,2,5}, x_{3,5,1}, x_{4,3,1}, x_{5,4,1}\}          
\\&\circled{4}=\{x_{1,1,2}, x_{2,3,2}, x_{3,2,2}, x_{4,5,1}, x_{5,4,1}\}
&&\circled{5}=\{x_{1,1,2}, x_{2,3,2}, x_{3,5,2}, x_{4,2,1}, x_{5,4,1}\}          
&&\circled{6}=\{x_{1,1,2}, x_{2,4,1}, x_{3,2,2}, x_{4,5,1}, x_{5,3,1}\}
\\&\circled{7}=\{x_{1,1,2}, x_{2,4,1}, x_{3,5,2}, x_{4,2,1}, x_{5,3,1}\}          
&&\circled{8}=\{x_{1,1,3}, x_{2,4,1}, x_{3,5,1}, x_{4,2,1}, x_{5,3,2}\}
&&\circled{9}=\{x_{1,2,2}, x_{2,3,2}, x_{3,1,1}, x_{4,5,1}, x_{5,4,2}\}          
\\&\circled{10}=\{x_{1,2,2}, x_{2,4,2}, x_{3,1,1}, x_{4,5,1}, x_{5,3,1}\}
&&\circled{11}=\{x_{1,3,1}, x_{2,1,1}, x_{3,2,1}, x_{4,4,7}, x_{5,5,1}\}         
&&\circled{12}=\{x_{1,3,1}, x_{2,1,1}, x_{3,2,1}, x_{4,4,7}, x_{5,5,2}\}
\\&\circled{13}=\{x_{1,3,1}, x_{2,1,1}, x_{3,2,2}, x_{4,4,5}, x_{5,5,1}\}         
&&\circled{14}=\{x_{1,3,1}, x_{2,1,1}, x_{3,2,2}, x_{4,4,5}, x_{5,5,2}\}
&&\circled{15}=\{x_{1,3,1}, x_{2,1,1}, x_{3,2,3}, x_{4,4,10}, x_{5,5,1}\}        
\\&\circled{16}=\{x_{1,3,1}, x_{2,1,1}, x_{3,2,3}, x_{4,4,10}, x_{5,5,2}\}
&&\circled{17}=\{x_{1,3,1}, x_{2,1,1}, x_{3,2,4}, x_{4,4,10}, x_{5,5,1}\}        
&&\circled{18}=\{x_{1,3,1}, x_{2,1,1}, x_{3,2,4}, x_{4,4,10}, x_{5,5,2}\}
\\&\circled{19}=\{x_{1,3,2}, x_{2,1,1}, x_{3,2,1}, x_{4,4,10}, x_{5,5,1}\}        
&&\circled{20}=\{x_{1,3,2}, x_{2,1,1}, x_{3,2,1}, x_{4,4,10}, x_{5,5,2}\}
&&\circled{21}=\{x_{1,3,2}, x_{2,1,1}, x_{3,2,2}, x_{4,4,8}, x_{5,5,1}\}         
\\&\circled{22}=\{x_{1,3,2}, x_{2,1,1}, x_{3,2,2}, x_{4,4,8}, x_{5,5,2}\}
&&\circled{23}=\{x_{1,3,1}, x_{2,2,3}, x_{3,1,2}, x_{4,5,1}, x_{5,4,2}\}         
&&\circled{24}=\{x_{1,3,2}, x_{2,2,3}, x_{3,1,1}, x_{4,5,1}, x_{5,4,2}\}
\\&\circled{25}=\{x_{1,3,1}, x_{2,2,1}, x_{3,1,1}, x_{4,4,7}, x_{5,5,1}\}         
&&\circled{26}=\{x_{1,3,1}, x_{2,2,1}, x_{3,1,1}, x_{4,4,7}, x_{5,5,2}\}
&&\circled{27}=\{x_{1,3,1}, x_{2,2,1}, x_{3,1,2}, x_{4,4,10}, x_{5,5,1}\}        
\\&\circled{28}=\{x_{1,3,1}, x_{2,2,1}, x_{3,1,2}, x_{4,4,10}, x_{5,5,2}\}
&&\circled{29}=\{x_{1,3,1}, x_{2,2,2}, x_{3,1,1}, x_{4,4,10}, x_{5,5,1}\}        
&&\circled{30}=\{x_{1,3,1}, x_{2,2,2}, x_{3,1,1}, x_{4,4,10}, x_{5,5,2}\}
\\&\circled{31}=\{x_{1,3,1}, x_{2,2,3}, x_{3,1,1}, x_{4,4,8}, x_{5,5,1}\}         
&&\circled{32}=\{x_{1,3,1}, x_{2,2,3}, x_{3,1,1}, x_{4,4,8}, x_{5,5,2}\}
&&\circled{33}=\{x_{1,3,1}, x_{2,2,4}, x_{3,1,1}, x_{4,4,6}, x_{5,5,1}\}         
\\&\circled{34}=\{x_{1,3,1}, x_{2,2,4}, x_{3,1,1}, x_{4,4,6}, x_{5,5,2}\}
&&\circled{35}=\{x_{1,3,1}, x_{2,2,4}, x_{3,1,2}, x_{4,4,9}, x_{5,5,1}\}         
&&\circled{36}=\{x_{1,3,1}, x_{2,2,4}, x_{3,1,2}, x_{4,4,9}, x_{5,5,2}\}
\\&\circled{37}=\{x_{1,3,1}, x_{2,2,5}, x_{3,1,1}, x_{4,4,4}, x_{5,5,1}\}         
&&\circled{38}=\{x_{1,3,1}, x_{2,2,5}, x_{3,1,1}, x_{4,4,4}, x_{5,5,2}\}
&&\circled{39}=\{x_{1,3,1}, x_{2,2,5}, x_{3,1,2}, x_{4,4,7}, x_{5,5,1}\}         
\\&\circled{40}=\{x_{1,3,1}, x_{2,2,5}, x_{3,1,2}, x_{4,4,7}, x_{5,5,2}\}
&&\circled{41}=\{x_{1,3,2}, x_{2,2,1}, x_{3,1,1}, x_{4,4,10}, x_{5,5,1}\}        
&&\circled{42}=\{x_{1,3,2}, x_{2,2,1}, x_{3,1,1}, x_{4,4,10}, x_{5,5,2}\}
\\&\circled{43}=\{x_{1,3,2}, x_{2,2,4}, x_{3,1,1}, x_{4,4,9}, x_{5,5,1}\}         
&&\circled{44}=\{x_{1,3,2}, x_{2,2,4}, x_{3,1,1}, x_{4,4,9}, x_{5,5,2}\}
&&\circled{45}=\{x_{1,3,2}, x_{2,2,5}, x_{3,1,1}, x_{4,4,7}, x_{5,5,1}\}         
\\&\circled{46}=\{x_{1,3,2}, x_{2,2,5}, x_{3,1,1}, x_{4,4,7}, x_{5,5,2}\}
&&\circled{47}=\{x_{1,3,2}, x_{2,2,5}, x_{3,1,2}, x_{4,4,10}, x_{5,5,1}\}        
&&\circled{48}=\{x_{1,3,2}, x_{2,2,5}, x_{3,1,2}, x_{4,4,10}, x_{5,5,2}\}
\\&\circled{49}=\{x_{1,3,1}, x_{2,2,3}, x_{3,4,2}, x_{4,1,1}, x_{5,5,1}\}         
&&\circled{50}=\{x_{1,3,1}, x_{2,2,3}, x_{3,4,2}, x_{4,1,1}, x_{5,5,2}\}
&&\circled{51}=\{x_{1,3,1}, x_{2,2,3}, x_{3,5,2}, x_{4,1,1}, x_{5,4,2}\}         
\\&\circled{52}=\{x_{1,3,1}, x_{2,2,5}, x_{3,5,1}, x_{4,1,2}, x_{5,4,1}\}
&&\circled{53}=\{x_{1,3,2}, x_{2,2,4}, x_{3,5,1}, x_{4,1,1}, x_{5,4,2}\}         
&&\circled{54}=\{x_{1,4,1}, x_{2,1,2}, x_{3,2,2}, x_{4,5,1}, x_{5,3,1}\}
\\&\circled{55}=\{x_{1,4,3}, x_{2,1,1}, x_{3,2,1}, x_{4,5,1}, x_{5,3,2}\}         
&&\circled{56}=\{x_{1,4,4}, x_{2,1,1}, x_{3,2,2}, x_{4,5,1}, x_{5,3,2}\}
&&\circled{57}=\{x_{1,4,1}, x_{2,1,2}, x_{3,5,2}, x_{4,2,1}, x_{5,3,1}\}         
\\&\circled{58}=\{x_{1,4,2}, x_{2,1,1}, x_{3,5,1}, x_{4,2,2}, x_{5,3,2}\}
&&\circled{59}=\{x_{1,4,4}, x_{2,1,1}, x_{3,5,1}, x_{4,2,1}, x_{5,3,1}\}         
&&\circled{60}=\{x_{1,4,4}, x_{2,1,1}, x_{3,5,2}, x_{4,2,1}, x_{5,3,2}\}
\\&\circled{61}=\{x_{1,4,2}, x_{2,2,3}, x_{3,1,1}, x_{4,5,1}, x_{5,3,1}\}         
&&\circled{62}=\{x_{1,4,2}, x_{2,2,4}, x_{3,1,2}, x_{4,5,1}, x_{5,3,2}\}
&&\circled{63}=\{x_{1,4,3}, x_{2,2,1}, x_{3,1,1}, x_{4,5,1}, x_{5,3,2}\}         
\\&\circled{64}=\{x_{1,4,3}, x_{2,2,4}, x_{3,1,1}, x_{4,5,1}, x_{5,3,1}\}
&&\circled{65}=\{x_{1,4,3}, x_{2,2,5}, x_{3,1,2}, x_{4,5,1}, x_{5,3,2}\}         
&&\circled{66}=\{x_{1,4,4}, x_{2,2,5}, x_{3,1,1}, x_{4,5,1}, x_{5,3,1}\}
\\&\circled{67}=\{x_{1,4,1}, x_{2,2,5}, x_{3,5,1}, x_{4,1,2}, x_{5,3,2}\}         
&&\circled{68}=\{x_{1,4,2}, x_{2,2,1}, x_{3,5,1}, x_{4,1,1}, x_{5,3,2}\}
&&\circled{69}=\{x_{1,4,2}, x_{2,2,4}, x_{3,5,1}, x_{4,1,1}, x_{5,3,1}\}         
\\&\circled{70}=\{x_{1,4,2}, x_{2,2,4}, x_{3,5,2}, x_{4,1,1}, x_{5,3,2}\}
&&\circled{71}=\{x_{1,4,3}, x_{2,2,5}, x_{3,5,1}, x_{4,1,1}, x_{5,3,1}\}         
&&\circled{72}=\{x_{1,4,3}, x_{2,2,5}, x_{3,5,2}, x_{4,1,1}, x_{5,3,2}\}
\end{align*}}}

\begin{figure}
	\begin{center}
		\def\svgwidth{0.9\textwidth}
		\fontsize{8}{10}\selectfont
		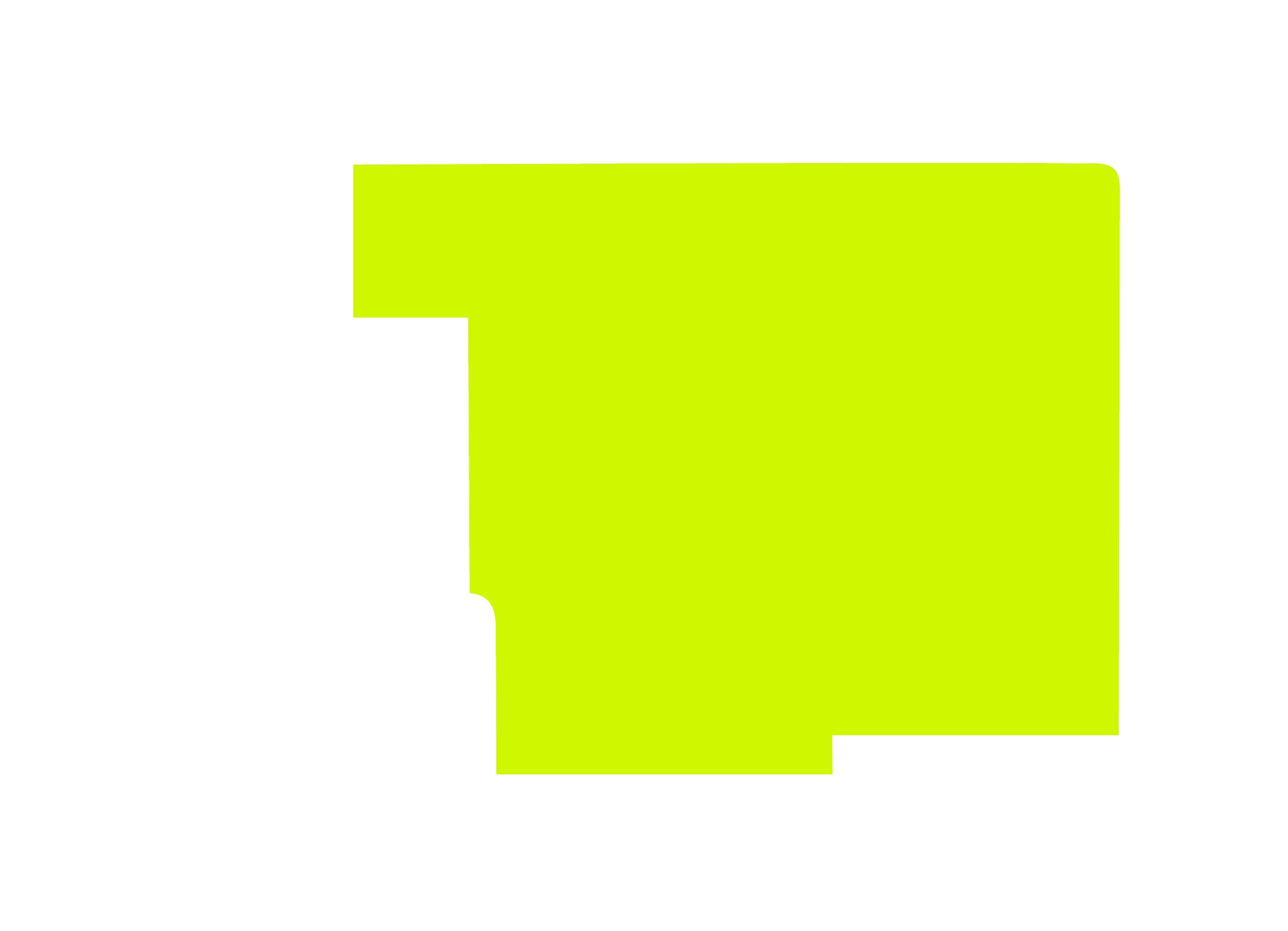	
	\caption{Part of the Heegaard diagram, where the marked points $\mathbf{z_1}$ are in the domains other than those in green.}
	\label{72-0}
\end{center}
\end{figure}

Let $\mathbf{z_1}$ consist of a marked point in all regions of the Heegaard diagram  except for $D_i$ with $i=4,16,17,21,22,23,25,26,27,36,37,38,58,59,60,61,62,63,64,68$. A neighborhood of these latter domains is illustrated  in Figure \ref{72-0}, where the aforementioned domains are colored green.\\

\begin{figure}
	\begin{center}
		\def\svgwidth{\textwidth}
		\fontsize{10}{10}\selectfont
		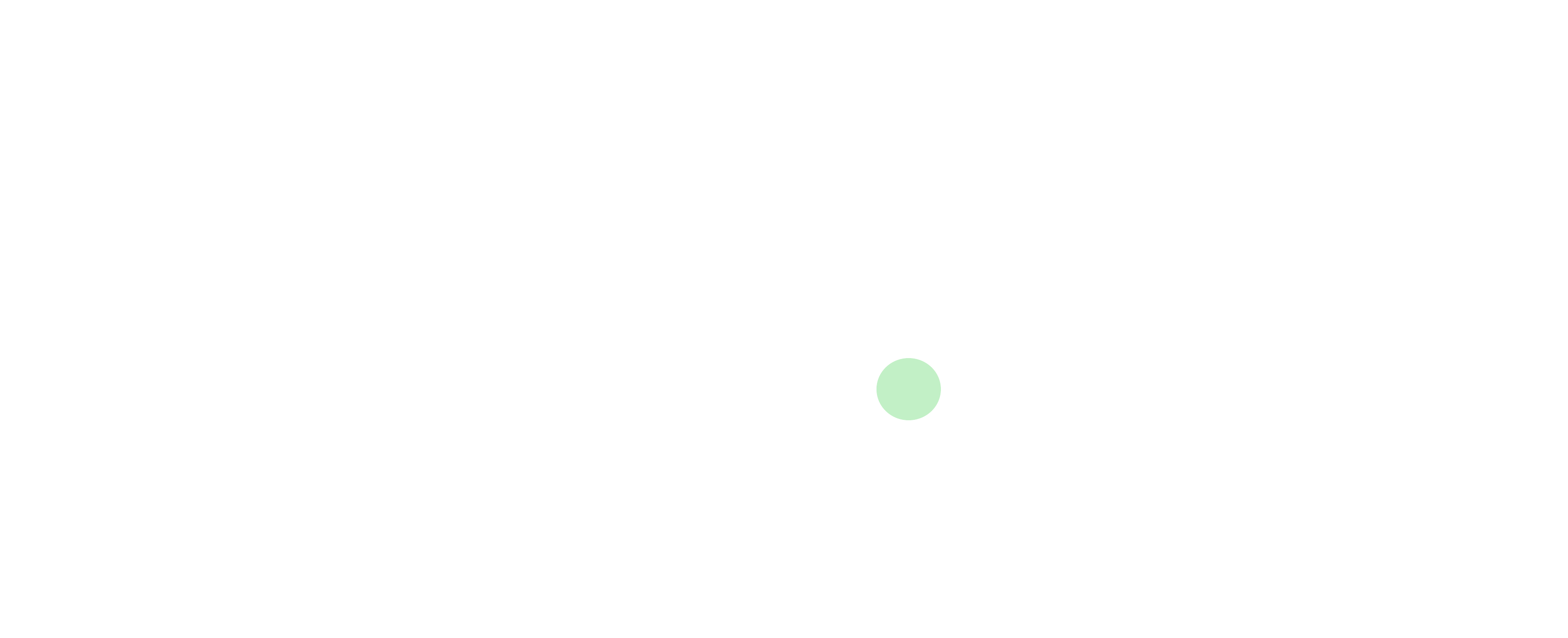	
	\caption{The differential corresponding to the diagram $(\Sigma,\alphas,\betas,\mathbf{z_1})$.}
	\label{72-2}
\end{center}
\end{figure}

The differential corresponding to the Heegaard diagram $(\Sigma,\alphas,\betas,\mathbf{z_1})$ is illustrated in Figure \ref{72-2}.  In fact, most of the positive Whitney disks of Maslov index $1$ for $(\Sigma,\alphas,\betas,\mathbf{z_1})$, which connect two of the aforementioned $72$ generators have polygonal domains, and their contribution to the differential is thus equal to $1$. There are precisely $12$ disks $\phi_i$ for $i=1,\dots,7$, and $\phi'_j$ for $j=1,2,5,6,7$, with non-polygonal domains, where we have
\begin{align*}
\phi_1&\in\pi_2\big(\circled{21},\circled{45}\big), & \phi'_1&\in\pi_2\big(\circled{22},\circled{46}\big), &
	\phi_2&\in\pi_2\big(\circled{13},\circled{37}\big),  & \phi'_2&\in\pi_2\big(\circled{14},\circled{38}\big),\\
	\phi_3&\in\pi_2\big(\circled{3},\circled{1}\big), &
	\phi_4&\in\pi_2\big(\circled{71},\circled{64}\big),&
	\phi_5&\in\pi_2\big(\circled{47},\circled{43}\big), & \phi'_5&\in\pi_2\big(\circled{48},\circled{44}\big),\\
	\phi_6&\in\pi_2\big(\circled{39},\circled{33}\big), & \phi'_6&\in\pi_2\big(\circled{40},\circled{34}\big), &
	\phi_7&\in\pi_2\big(\circled{35},\circled{31}\big), & \phi'_7&\in\pi_2\big(\circled{36},\circled{32}\big).
\end{align*}
The domains associated with these disks are 
{\small{\begin{align*}
&D(\phi_1)=D(\phi'_1)=D_4+D_{16}+D_{58}+D_{59}+D_{60}+D_{61},
&&D(\phi_2)=D(\phi'_2)=D(\phi_1)+D_{62}+D_{63}+D_{64},\\
&D(\phi_3)=D_4+D_{17}+D_{21}+D_{22}+D_{23}+D_{26}+D_{36},
&&D(\phi_4)=D_4+D_{17},\\
&D(\phi_5)=D(\phi'_5)=D_4+D_{16}+D_{17}+D_{27}+D_{58}+D_{59},
&&D(\phi_6)=D(\phi'_6)=D(\phi_5)+D_{60}+D_{61}+D_{62},\\
&D(\phi_7)=D(\phi'_7)=D(\phi_5)+D_{60}.
\end{align*}}}

By Lemma \ref{Non-Poly-1}, $\#\Mhat (\phi_i)=\#\Mhat (\phi'_j)=1$, for $i=1,\dots,4$ and $j=1,2$. Moreover, by Lemma \ref{Non-Poly-2}, $\#\Mhat (\phi_i)=\#\Mhat (\phi'_i)=1$ for $i=5,6,7$. Thus, the differential is as illustrated in Figure~\ref{72-2} and $H_*(\underline{\widehat{CF}}(\Sigma,\alphas,\betas,\mathbf{z_1}))$
 is generated by 
\[C=\big\{C_1=\circled{49},C_2=\circled{50},C_3=\circled{53},C_4=\circled{69}\big\}.\]

%
%
\begin{figure}
	\begin{center}
		\def\svgwidth{.8\textwidth}
		\fontsize{6}{10}\selectfont
		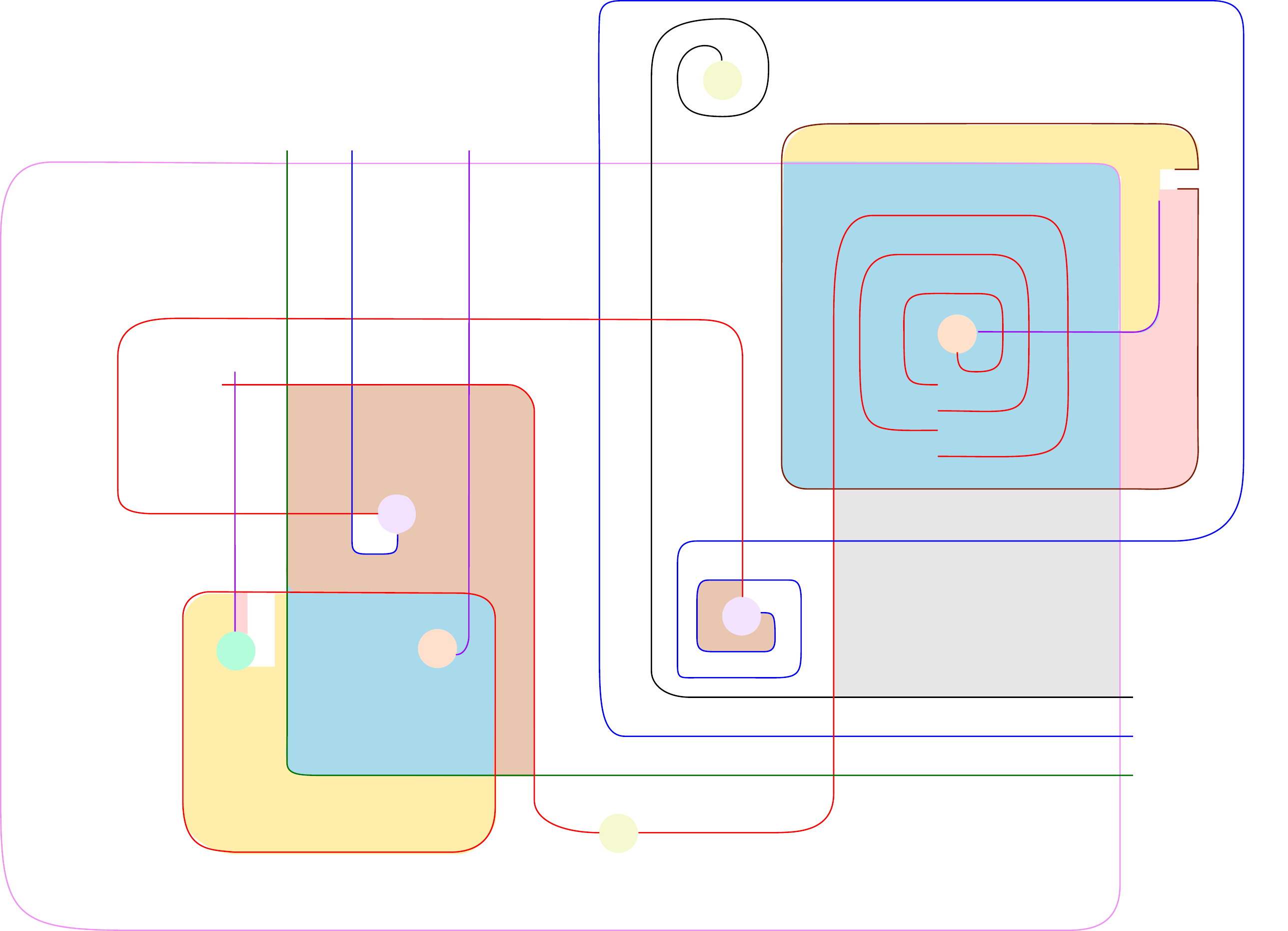	
		\caption{If $I_1,I_2,I_3,I_4$ and $I_5$ denote the set of indices of domains colored yellow, blue, pink, grey, and brown respectively, the domains associated with $\psi_1,\psi_3$ and $\psi_6$ are given by $D(\psi_1)=\sum_{i\in I_1\cup I_2\cup I_3} D_i$, $D(\psi_3)=\sum_{i\in I_2\cup I_3\cup I_5}D_i$ and $D(\psi_6)=\sum_{i\in I_2\cup I_4\cup I_5}D_i$.}
		\label{72-1-2}
	\end{center}
\end{figure}

To compute $\underline{\widehat{HF}}(M,\spinc_2)$, we  need to determine the matrices $l$, $n$, $p$, and $k$ in the Lemma \ref{HF}. 
All possible positive disks with Maslov index $1$ between the generators in $C$ are 
\[
\begin{aligned}
	\psi'_1,\psi_1&\in\pi_2(\circled{49},\circled{50}),\ \
	&\psi'_2,\psi_2\in\pi_2(\circled{53},\circled{69}),\\
	\psi_3&\in\pi_2(\circled{49},\circled{53}),\ \
	&\psi_4\in\pi_2(\circled{50},\circled{69}).
\end{aligned}
\]
For $i=1,2$, the domain associated with $\psi'_i$  is a polygon. The domain associated with $\psi_1$ is shown in Figure \ref{72-1-2} as the union of yellow, blue and pink regions.  By Lemma \ref{Non-Poly-3}, $\#\Mhat (\psi_1)=1$. The domain associated with $\psi_3$ is shown in Figure \ref{72-1-2} as the union of blue, brown and pink regions. Setting $V=\#\widehat{M}(\psi_3)$, we find
\[l=\left(\begin{array}{cccc}
	0&0&0&0\\
	1+e^{P_1}&0&0&0\\
	V&0&0&0\\
	0&*&*&0
\end{array}
\right).
\]

Let $A$ and $B$ denote the chain complexes generated by all the generators colored in light green and yellow in Figure \ref{72-2}, respectively. Denote the generators of $A$ and $B$ by $A_i$ and $B_i$, respectively. We may choose the labeling of the aforementioned generators of $A$ and $B$ such that $k$ is a lower triangular matrix. Therefore, 
\[
p(I+k)^{-1}n=pn+pkn+pk^2n+\dots. 
\]
For $j\geq 0$, since the coefficients are in $\Z_2$, each non-zero entry in $pk^jn$ is of the form 
\[
(pk^jn)_{wv}=\sum_{r_1,\dots,r_{j+1}}p_{wr_1}k_{r_1r_2}\dots k_{r_jr_{j+1}}n_{r_{j+1}v},
\] 
and implies the existence of positive disks $\lambda_t$ of Maslov index $1$ for $t=1,\dots,j+1$, where 
\begin{align*}
&\lambda_1\in\pi_2(C_v,B_{r_{j+1}}),
&&\lambda_{j+2}\in\pi_2(A_{r_1},C_w)&&\text{and}&&
&&\lambda_t\in\pi_2(A_{r_t},B_{r_{t-1}}),\ \ t=2,\dots,j+1.
\end{align*}
and $\#\widehat{M}(\lambda_t)=1$. In particular, $D(\lambda_t)>0$ for all $t$ and 
\begin{equation}\label{Eq-domain}
	D(\lambda_t)\subset\sum_{t=1}^{j+2}D(\lambda_t)=\sum_{t=1}^{j+1}D(\lambda'_t)+D(\lambda)\pm D(P_1),\ \ \mu(\lambda_t)=1, \ \ D(\lambda_t)>0,
\end{equation}
for some positive Whitney disks $\lambda'_t\in\pi_2(A_{r_t},B_{r_t})$  and $\lambda\in\pi_2(C_v,C_w)$ of Maslov index $1$. Potentially, there are only two such sequences satisfy (\ref{Eq-domain}), which are shown in Figure \ref{72-2-2}. Here $\psi_5\in\pi_2(\circled{49},\circled{51})$ is a disk with a polygonal domain. 
The domain associated with the disk $\psi_6\in\pi_2(\circled{23},\circled{53})$ is shown in Figure \ref{72-1-2} as the union of grey, brown and blue regions.\\ 

\begin{figure}
	\begin{center}
		\def\svgwidth{0.5\textwidth}
		\fontsize{10}{10}\selectfont
		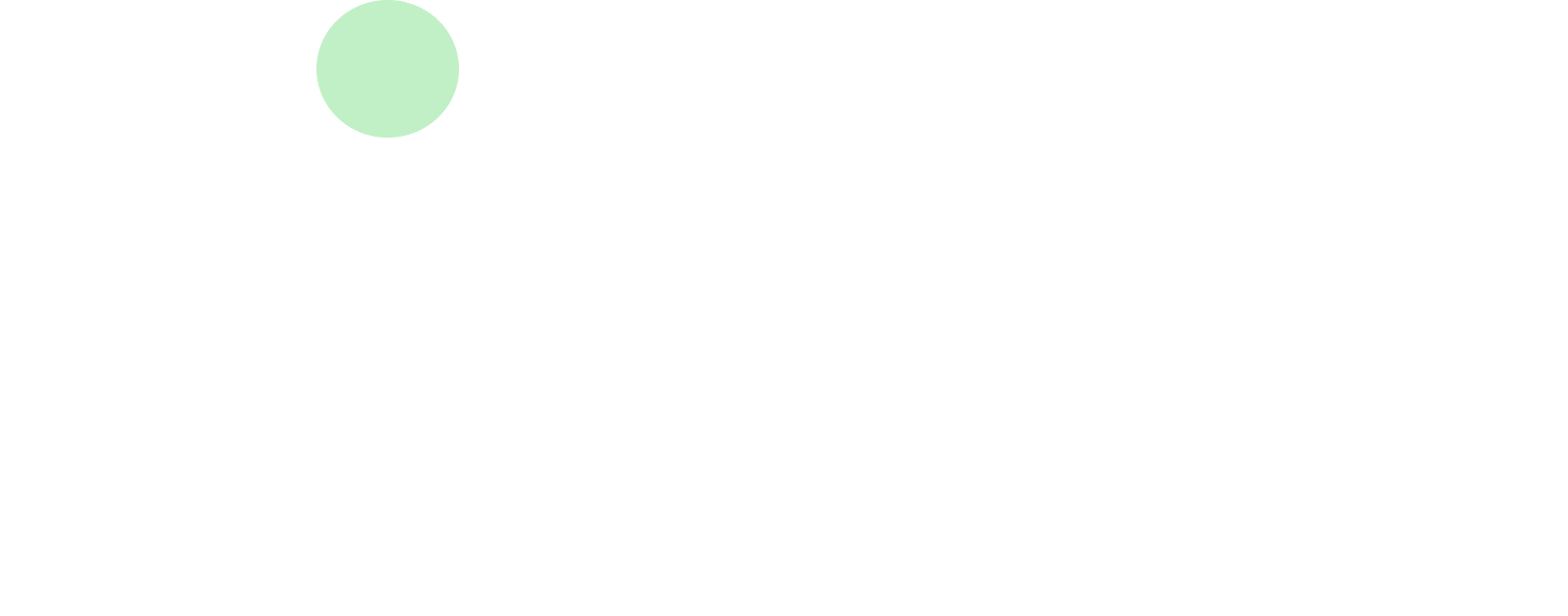	
		\caption{ Potential sequences of disks corresponding to non-zero summands $p_{wr_1}k_{r_1r_2}\dots k_{r_jr_{j+1}}n_{r_{j+1}v}$ in $(pk^jn)_{wv}$.}
		\label{72-2-2}
	\end{center}
\end{figure}

\begin{figure}
	\begin{center}
		\def\svgwidth{\textwidth}
		\fontsize{6}{10}\selectfont
		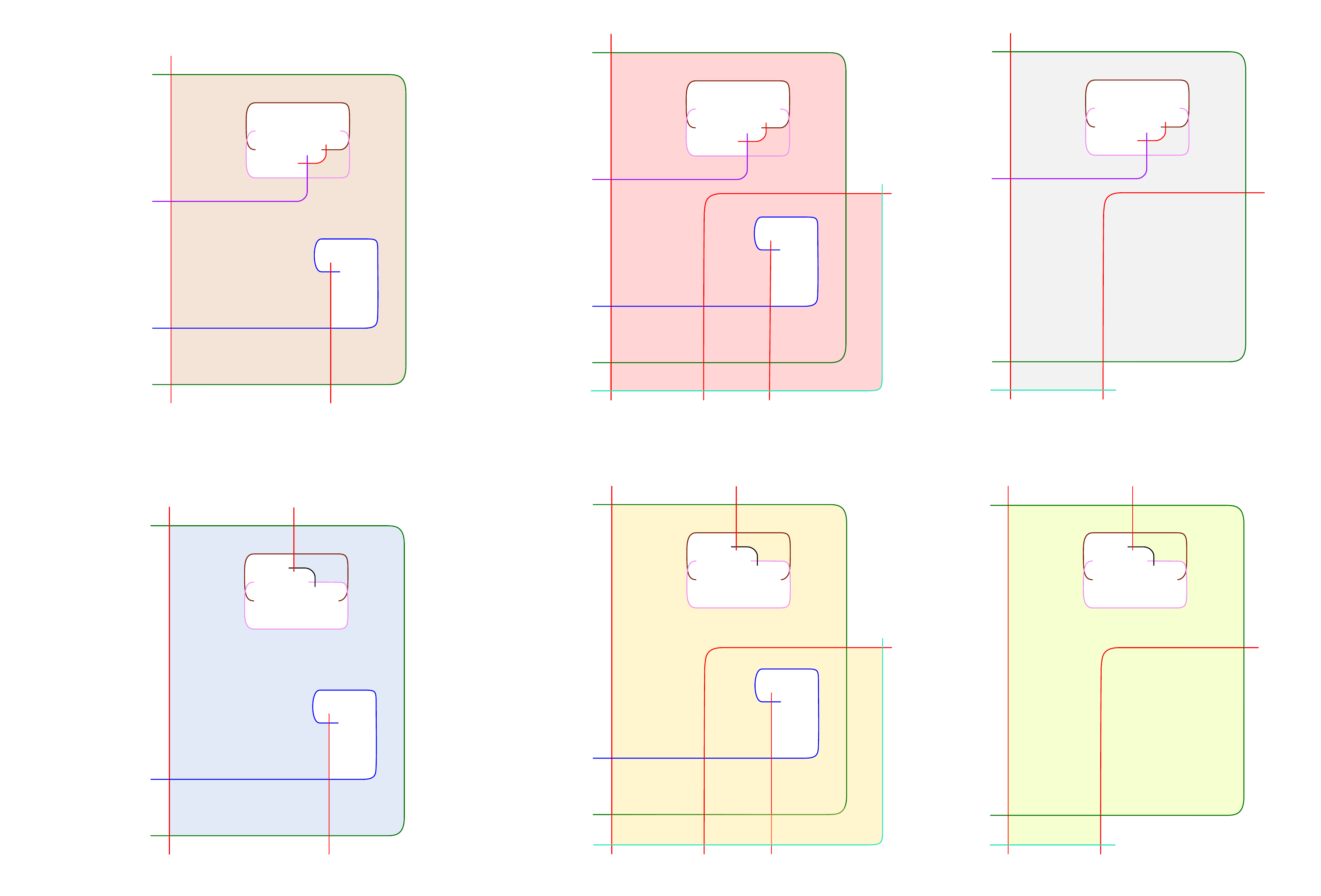	
	\caption{Domains associated with disks $\eta_a$, $\eta_b$, $\eta_c$, $\eta_d$, in $(a)$, $(b)$, $(c)$, $(d)$, respectively.}
		\label{72-6-1}
	\end{center}
\end{figure}

\begin{lemma}\label{lem-psi-3-6}
With the above notation in place, we have $\#\Mhat (\psi_3)=\#\Mhat (\psi_6)$.
\end{lemma}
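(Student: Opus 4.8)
The plan is to deduce the equality by realizing both $\psi_3$ and $\psi_6$ as boundary degenerations of the same family of index-two Whitney disks, using that the mod-$2$ count of ends of a one-dimensional moduli space of index-two disks vanishes. First I would collect the index-one disks that connect the relevant generators $\circled{23},\circled{49},\circled{51},\circled{53}$ and their neighbours, recording in particular the polygonal disk $\psi_5\in\pi_2(\circled{49},\circled{51})$ with $\#\Mhat(\psi_5)=1$, together with $\psi_3\in\pi_2(\circled{49},\circled{53})$ and $\psi_6\in\pi_2(\circled{23},\circled{53})$. Comparing the domain decompositions of Figure~\ref{72-1-2}, one sees that $D(\psi_3)$ and $D(\psi_6)$ share the blue and brown regions and differ only in the pink versus grey pieces; this common overlap is exactly what an index-two disk can bridge, and it explains the square of disks with corners $\circled{23},\circled{49},\circled{51},\circled{53}$ appearing in Figure~\ref{72-2-2}.

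Next I would introduce the four index-two Whitney disks $\eta_a,\eta_b,\eta_c,\eta_d$ whose domains are drawn in Figure~\ref{72-6-1} through their region decompositions $D^a_i,\dots,D^d_i$, and for each of them enumerate every factorization as a juxtaposition of two positive index-one disks. The decompositions are arranged so that each $\eta_x$ has precisely two such degenerations: one end factors through a $\psi_3$-type (or $\psi_5$-type) disk and the other through a $\psi_6$-type disk, with the remaining factor being a polygon or a disk already evaluated in Lemmas~\ref{Non-Poly-1}, \ref{Non-Poly-2} and \ref{Non-Poly-3}. By the Gromov-type compactification of $\Mhat(\eta_x)$, the two end-counts agree mod $2$, so each $\eta_x$ yields one relation among $\#\Mhat(\psi_3)$, $\#\Mhat(\psi_6)$ and already-known factors (possibly twisted by $e^{\pm P_1}$, since we work over $\Z_2[H^1(M,\Z)]$). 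Assembling the relations coming from $\eta_a,\dots,\eta_d$ and cancelling the unit factors leaves exactly $\#\Mhat(\psi_3)=\#\Mhat(\psi_6)$.

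The main obstacle is the enumeration step. Because $(\Sigma,\alphas,\betas,z)$ is very far from nice, the disks $\eta_x$ have genuinely non-polygonal domains, and I must rule out every spurious degeneration, i.e. any further splitting $\eta_x=\mu*\mu'$ into positive index-one classes with $D(\mu)+D(\mu')=D(\eta_x)$ other than the two intended ones. This is a positivity-and-index bookkeeping problem to be checked region by region from the explicit pieces $D^x_i$ of Figure~\ref{72-6-1}: one verifies that any alternative partition of the domain either fails positivity on some $D_i$ or violates the Maslov index constraint $\mu(\mu)=\mu(\mu')=1$. Once only the two prescribed ends survive for each $\eta_x$, the cancellation of ends forces the claimed equality, and the lemma follows.
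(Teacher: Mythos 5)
Your overall strategy is the one the paper uses: realize $\psi_3$ and $\psi_6$ inside the ends of the index-two classes $\eta_a,\eta_b,\eta_c,\eta_d$ of Figure~\ref{72-6-1} and combine the resulting mod-$2$ end-count relations. However, the combinatorial structure you assert is not the one that actually occurs, and as stated your relations would not close up. Each $\eta_\bullet$ admits \emph{three} (not two) degenerations into juxtapositions of positive index-one classes, and no single $\eta_\bullet$ sees both $\psi_3$ and $\psi_6$: the class $\psi_3$ occurs only in the end $\phi^a_1*\sigma^a_1$ of $\eta_a$, and $\psi_6$ only in the end $\phi^c_1*\sigma^c_1$ of $\eta_c$, while $\eta_b$ and $\eta_d$ involve neither. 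Moreover the companion factors $\phi^\bullet_2,\phi^\bullet_3$ in the remaining ends are themselves non-polygonal classes whose counts are \emph{not} supplied by Lemmas~\ref{Non-Poly-1}--\ref{Non-Poly-3}; only the $\sigma^\bullet_j$ are polygons or of the type covered by Lemma~\ref{Non-Poly-1}. So each $\eta_\bullet$ yields a relation $\sum_{i=1}^3\#\Mhat(\phi^\bullet_i)=0$ in which two of the three counts remain unknown, rather than a direct identity between $\#\Mhat(\psi_3)$ and $\#\Mhat(\psi_6)$ with unit cofactors.

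The argument is rescued by a feature your proposal does not identify: the domains of the auxiliary classes coincide in pairs across the four pictures, namely $D(\phi^a_2)=D(\phi^c_2)$, $D(\phi^a_3)=D(\phi^b_1)$, $D(\phi^c_3)=D(\phi^d_1)$, $D(\phi^b_2)=D(\phi^d_2)$ and $D(\phi^b_3)=D(\phi^d_3)$. Summing the four relations, every unknown auxiliary count appears exactly twice and cancels mod $2$, leaving $\#\Mhat(\psi_3)+\#\Mhat(\psi_6)=0$. Without these domain coincidences the system of relations is underdetermined, so this matching --- and not only the positivity-and-index bookkeeping on each $\eta_\bullet$ separately --- is the essential point you would need to supply. (Also, $\psi_5$ plays no role in this lemma; it only enters the surrounding discussion of the sequences in Figure~\ref{72-2-2}.)
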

\begin{proof}
The domains associated with $\psi_3$ and $\psi_6$ are extended in two ways in Figure~\ref{72-6-1}. The domains $D^\bullet_i$ for $\bullet=a,b,c$ and $d$ denote the components of the regions colored pink, grey, yellow and green, respectively. They determine the domains of Whitney disks $\eta_a,\eta_b,\eta_c$ and $\eta_d$, respectively, with domains
$D(\eta_\bullet)=\sum_{i}D^\bullet_i$. Note that for $\bullet=a,c$ the values of $i$ are in $\{1,\ldots,8\}$, while for $\bullet=b,d$ the values of  $i$ are in $\{1,\ldots,4\}$. The  possible degenerations for the disks $\eta_a,\eta_b,\eta_c$ and $\eta_d$ are given by
\[\eta_\bullet=\phi^\bullet_1*\sigma^\bullet_1=\sigma^\bullet_2*\phi^\bullet_2=\sigma^\bullet_3*\phi^\bullet_3\quad\quad\bullet=a,c,\]
\[\eta_\bullet=\phi^\bullet_1*\sigma^\bullet_1=\sigma^\bullet_2*\phi^\bullet_2=\phi^\bullet_3*\sigma^\bullet_3\quad\quad\bullet=b,d,\]
where the corresponding domains are given by
\begin{align*}
&D(\phi^\bullet_1)=\sum_{j=1}^5D^\bullet_j,
&&D(\phi^\bullet_2)=D^\bullet_1+\sum_{j=3}^8D^\bullet_j,
&&D(\phi^\bullet_3)=D^\bullet_1+D^\bullet_2+D^\bullet_3+D^\bullet_6\\
&D(\sigma^\bullet_3)=D^\bullet_4+D^\bullet_5+D^\bullet_7+D^\bullet_8,
&&D(\phi^{\star}_1)=D^{\star}_1+D^{\star}_2+D^{\star}_3,
&&D(\phi^{\star}_2)=D^{\star}_1+D^{\star}_3+D^{\star}_4,\\
&\text{and}&&D(\phi^{\star}_3)=D^{\star}_1+D^{\star}_2+D^{\star}_4
\end{align*}
for $\bullet=a,c$ and $\star=b,d$, while the domains associated with $\sigma^a_j,\sigma^c_j,\sigma^b_i$ and $\sigma^d_i$ are polygons for $j=1,2$ and $i=1,2,3$. By Lemma \ref{Non-Poly-1}, we also have $\#\widehat{M}(\sigma^\bullet_3)=1$, $\bullet=a,c$. Therefore
\begin{equation}\label{eqq-01}
	\sum_{i=1}^3\#\widehat{M}(\phi^\bullet_i)=0\quad\quad\text{for}\ \ \bullet=a,b,c,d.
\end{equation}

On the other hand, we have 
\begin{align*}
&D(\phi^a_1)=D(\psi_3), 
&&D(\phi^c_1)=D(\psi_6),
&&D(\phi^a_2)=D(\phi^c_2),
&&D(\phi^{b}_2)=D(\phi^{d}_2),\\
&D(\phi^{a}_3)=D(\phi^{b}_1),
&&D(\phi^{c}_3)=D(\phi^{d}_1),
&&\quad\quad\text{and}&&D(\phi^{b}_3)=D(\phi^{d}_3).
\end{align*}
Thus by \ref{eqq-01}, we have $V=\#\Mhat (\psi_3)=\#\Mhat (\psi_6)$.
\end{proof}

Having established Lemma~\ref{lem-psi-3-6}, we conclude that 
\[p(I+k)^{-1}n=\left(\begin{array}{cccc}
	0&0&0&0\\
	0&*&*&*\\
	V&*&*&*\\
	0&*&*&*
\end{array}
\right)\quad\quad\Rightarrow\quad\quad
l+p(I+k)^{-1}n=\left(\begin{array}{cccc}
	0&0&0&0\\
	1+e^{P_1}&*&*&*\\
	0&*&*&*\\
	0&*&*&*
\end{array}
\right).
\]
This means that $\circled{49}$ survives in  $\underline{\widehat{HF}}(M,\spinc_2)$, and the latter group is thus non-trivial.

\end{document}